\newcommand{\Zz}{\mathbb{Z}}
\newcommand{\SL}{\mathrm{SL}}
\renewcommand{\epsilon}{\varepsilon}
\newtheorem{thm}{Theorem}[section]
\newtheorem{lem}[thm]{Lemma}
\newtheorem{prop}[thm]{Proposition}
\newtheorem{cor}[thm]{Corollary}
\newtheorem{rmk}{Remark}[section]
\newcommand{\delete}[1]{}
\title[Strong subconvexity]{{\normalfont Strong subconvexity for self-dual $\mathrm{GL} (3)$ $L$-functions}}
\author{Yongxiao Lin}
\address{EPFL/MATH/TAN, Station 8, CH-1015 Lausanne, Switzerland }
\email{yongxiao.lin@epfl.ch}
\author{Ramon Nunes}
\address{Departamento de Matem\'atica, Universidade Federal do Cear\'a, Campus do Pici, Bloco 914, 60440-900 Fortaleza-CE, Brasil \& Max Planck Institute for Mathematics, Vivatsgasse 7, 53111 Bonn, Germany}
\email{ramonmnunes@gmail.com}
\author{Zhi Qi}
\address{School of Mathematical Sciences, Zhejiang University, Hangzhou, 310027, China}
\email{zhi.qi@zju.edu.cn}
\thanks{Y. L. was partially supported by a
	DFG-SNF lead agency program grant (grant
	200020L\_175755) and the SNF (grant 200021\_197045). Z. Q. was supported by NSFC (grant 12071420).}
\subjclass[2010]{11F66, 11M41}
\keywords{$L$-functions, subconvexity}
\begin{document}

	\begin{abstract}
		In this paper, we prove strong subconvexity bounds for self-dual $\mathrm{GL}(3)$ $L$-functions in the $t$-aspect and for $\mathrm{GL}(3)\times\mathrm{GL}(2)$ $L$-functions in the $\mathrm{GL}(2)$-spectral aspect. The bounds are strong in the sense that they are the natural limit of the moment method pioneered by Xiaoqing Li, modulo current knowledge on estimate for the second moment of $\rm GL(3)$ $L$-functions on the critical line.
	\end{abstract}

	\maketitle

	\section{Introduction}

	Let $\phi$ be a {\it self-dual} Hecke--Maass cusp form for $\mathrm{SL}(3,\Zz)$. 
	Let $f_j$ be a Hecke--Maass cusp form for $\mathrm{SL}(2,\Zz)$ for the Laplacian with eigenvalue $1/4+t_j^2$ ($t_j \geq 0$). 
	In this paper, we consider the  subconvexity problem for the $L$-functions $L(s,\phi)$ and $L(s,\phi\times f_j)$.  
	More precisely, for $t$ real, we wish to prove subconvex bounds  of the form
	\begin{equation}\label{eq: subconvex}
		|L(1/2+it,\phi)|^2 \ll_{\phi, \epsilon} (|t|+1)^{\theta  +\epsilon}, \quad  L(1/2,\phi\times f_j)\ll_{\phi, \epsilon} (t_j + 1)^{\theta + \epsilon} , 
	\end{equation}
	for an exponent $ \theta < 3/2$ as small as possible ($\theta = 0$ is the Lindel\"of hypothesis).   
	
	The first result in this problem was obtained by Xiaoqing Li \citep{Li2011bounds} who proved that $\theta = 11/8 $ is admissible in \eqref{eq: subconvex}. Later,  her analysis was refined by McKee,  Haiwei Sun, and  Yangbo Ye \citep{mckee2015improved} and by the second named author \citep{Nunes-GL3} to get the improved exponents $\theta =  4/3 $ and $\theta = 5/4$, respectively.

	Xiaoqing Li obtained a Lindel\"of-on-average upper bound for the first moment of  $L$-functions for $\mathrm{GL}(3) \times \mathrm{GL}(2)$ via the $\mathrm{GL}(2)$ spectral decomposition (Kuznetsov trace formula). In order to deduce individual bounds from the average result, it is crucial that we apply the non-negativity result of Lapid \citep{lapid2003non}: 
	\begin{align}\label{non-negativity}
		L(1/2,\phi\times f_j)\geq 0. 
	\end{align}

	We also mention the work of Munshi  \citep{munshi2015circle}, where he proved the subconvex bound  for $L (1/2+it, \phi)$ as in \eqref{eq: subconvex} with $\theta = 11/8$ without the self-dual assumption on $\phi$. The current record in this general case is  $\theta = 27/20$  due to Aggarwal \citep{Agg2021}. In a preprint \cite{Kumar}, Kumar also announced a subconvexity bound for $L(1/2,\phi\times f_j)$ for general $\phi$ with $\theta = 151/102$.
	
	Since general forms for $\SL(3,\Zz)$ do not necessarily satisfy \eqref{non-negativity}, Munshi, Aggarwal, and Kumar must follow a quite different path---the delta method approach. 
	
	Despite the differences between the two approaches, a common feature of all these works is the central role played by the $\mathrm{GL}(3)$ Voronoi summation formula. 
	
	Finally, Nelson \citep{Nelson} has recently announced an immense breakthrough: He showed a subconvexity bound, albeit with weaker exponents, for all standard $L$-functions on $\mathrm{GL}(N)$ in the case of uniform growth of the spectral parameters (\textit{i.e.}, away from the conductor-dropping case).

	\subsection{Statement of results}

	Our main result is the following bound for the first moment of  $L$-functions for $\mathrm{GL}(3) \times \mathrm{GL}(2)$:

	\begin{thm}\label{GL3}
		For   $\epsilon > 0$ and $T$, $M   $ large  with   $ T^{\epsilon}\leq M   \leq T^{1-\epsilon}$, we have 
		\begin{equation}\label{1eq: main bound}
			\sum_{ |t_j-T|\leq M   } L\left(1/2,\phi\times f_j\right)+ \int_{T-M   }^{T+M   }\left|L\left(1/2+it,\phi\right)\right|^2dt 
			\ll_{\phi,\varepsilon} M    T^{1+\epsilon} + \frac {T^{5/4+\epsilon}} {M   ^{1/4}}.
		\end{equation}  
	\end{thm}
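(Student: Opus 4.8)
The plan is to establish the moment bound \eqref{1eq: main bound} by combining an approximate functional equation, the $\mathrm{GL}(2)$ Kuznetsov trace formula together with a spectrally complete treatment (cusp forms, Eisenstein series, and the diagonal term), and the $\mathrm{GL}(3)$ Voronoi summation formula, followed by stationary phase analysis and a final estimation of the resulting exponential sums with $\mathrm{GL}(3)$ Hecke eigenvalues. First I would attach a smooth weight $h(t_j)$ (and the analogous factor for the continuous spectrum) that concentrates on $|t_j - T| \le M$ with rapid decay outside, so that the left-hand side of \eqref{1eq: main bound} is dominated (using non-negativity \eqref{non-negativity} of $L(1/2,\phi\times f_j)$ and positivity of $|L(1/2+it,\phi)|^2$) by a spectrally complete sum. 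Opening each $L$-value by an approximate functional equation, one sees both pieces as weighted sums of $\mathrm{GL}(2)$ Fourier coefficients $\lambda_{f_j}(n)$ (resp. the divisor-type coefficients coming from $|L(1/2+it,\phi)|^2$) against $\mathrm{GL}(3)$ coefficients $A(m,n)$, with $n$ of size about $T^{3/2}$ after balancing; crucially, by design of $h$ the cuspidal and Eisenstein contributions assemble into the \emph{same} arithmetic sum, which is the structural reason the two terms on the left of \eqref{1eq: main bound} can be handled simultaneously.

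Next I would apply the Kuznetsov formula to the $n$-sum over the spectrum. The diagonal term contributes the main term $\ll M T^{1+\epsilon}$: it is essentially $M$ (the spectral count) times a second moment of $L(1/2,\phi)$-type data of length $\asymp T$, which by the Rankin--Selberg bound for $\sum_{n\le X}|A(1,n)|^2 \ll X^{1+\epsilon}$ gives $\ll M T^{1+\epsilon}$. The off-diagonal produces Kloosterman sums $S(n_1, n_2; c)$ weighted by a Bessel transform of $h$; because $h$ localizes at $t_j \asymp T$ with width $M$, the Bessel kernel $\mathcal{J}$ oscillates and restricts the modulus $c$ and the variables to ranges where $\sqrt{n_1 n_2}/c \asymp T$ with effective length $\asymp M T/\sqrt{n_1 n_2}$ in one variable. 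At this point I would apply $\mathrm{GL}(3)$ Voronoi to the sum over (say) $n_2$ carrying the coefficients $A(m, n_2)$: this replaces $n_2 \asymp T^{3/2}$ by a dual variable of length roughly $c^3/n_2 \asymp c^3/T^{3/2}$, introduces a hyper-Kloosterman (or Kloosterman-times-Ramanujan) sum modulo $c$, and a Hankel-type integral transform.

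After Voronoi the character sums over $c$ can be opened and the $c$-sum, now essentially a complete exponential sum in a modulus dividing $c$, can be evaluated; combined with the $\mathrm{GL}(2)$ Kloosterman sum one is left with a short exponential sum with $\mathrm{GL}(3)$ coefficients and a smooth oscillatory weight whose phase I would analyze by stationary phase / repeated integration by parts. Trivially estimating the dual $\mathrm{GL}(3)$ sum by $\sum |A(m,n)| \ll (\text{length})^{1+\epsilon}$ via Cauchy--Schwarz and the Rankin--Selberg bound, and tracking the saving from the oscillatory $c$-integral (a factor roughly $(MT)^{1/2}$ from the stationary phase in the long variable and further savings from the reduced length of the dual sum), one should arrive at the off-diagonal bound $\ll T^{5/4+\epsilon}/M^{1/4}$. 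I expect the \emph{main obstacle} to be precisely this last step: controlling the dual exponential sum uniformly in $c$ and in the $\mathrm{GL}(3)$ shift parameter $m$, because the phase after Voronoi is a sum of several competing terms (from the $\mathrm{GL}(2)$ side, the $\mathrm{GL}(3)$ Hankel transform, and the approximate functional equation weights) and one must ensure the stationary point is non-degenerate and that the resulting main term from stationary phase does not itself exceed $T^{5/4+\epsilon}/M^{1/4}$ — this is where the conductor-balancing $n \asymp T^{3/2}$, $M \asymp T^{?}$ is doing delicate work, and where any slack in the second-moment input for $\mathrm{GL}(3)$ $L$-functions on the critical line directly propagates into the final exponent.
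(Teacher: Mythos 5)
Your opening moves (positivity plus a smooth spectral weight, approximate functional equations, Kuznetsov, diagonal $\ll MT^{1+\epsilon}$ via Rankin--Selberg, then $\mathrm{GL}(3)$ Voronoi on the off-diagonal) match the paper, and the difference in how you apply Voronoi (Miller--Schmid with hyper-Kloosterman sums versus the paper's reversed/balanced Miller--Zhou formula, which absorbs the Kloosterman sum directly) is only a matter of bookkeeping. The genuine gap is in your endgame. After the conductor drop coming from the combination of the arithmetic factor $e(\pm n/r)$ with the Hankel transform, the paper does \emph{not} estimate the dual sum trivially: it writes the resulting weight, via stationary phase and Mellin inversion \`a la Young, as $\int_{|t|\asymp U}\lambda(t)(n/r)^{it}\,dt$ with $U\asymp T/M$, so that the off-diagonal becomes a short \emph{mean value} of the Dirichlet series $\sum_n A(1,n)n^{-1/2+it}$ against $\sum_r r^{-1/2-it}$ over $|t|\asymp T/M$. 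Two further inputs are then essential and are absent from your sketch: (i) the functional equation of $L(s,\phi)$ is applied to shorten the $n$-sum to length $\min\{N^{\pm},U^{3}/N^{\pm}\}\le U^{3/2}=(T/M)^{3/2}$ whenever it exceeds the square root of the analytic conductor $(T/M)^{3}$; and (ii) Gallagher's Archimedean large sieve plus Cauchy--Schwarz then yield $M\,T^{\epsilon}\sqrt{R^{\pm}+U}\,\sqrt{\min\{N^{\pm},U^{3}/N^{\pm}\}+U}\ll M\,(T/M)^{1/2}(T/M)^{3/4}=T^{5/4+\epsilon}/M^{1/4}$. That exponent is precisely the second-moment barrier $\int_{-U}^{U}|L(1/2+it,\phi)|^{2}dt\ll U^{3/2+\epsilon}$ in disguise.

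By contrast, your plan---``trivially estimating the dual $\mathrm{GL}(3)$ sum by Cauchy--Schwarz and Rankin--Selberg'' and crediting a stationary-phase saving of $(MT)^{1/2}$---is exactly the route of Li, McKee--Sun--Ye, and Nunes, and it tops out at $MT^{5/4+\epsilon}$ (the bound \eqref{average-Nunes}), which is never $\ll MT^{1+\epsilon}+T^{5/4+\epsilon}/M^{1/4}$ for $M\ge T^{\epsilon}$. Also, your stated dual length $c^{3}/T^{3/2}$ is off: in the relevant ranges the dual variable runs up to $\asymp\sqrt{N}$ in the $+$ case and up to $\sqrt{N}/M^{3-\epsilon}$ in the $-$ case, and it is exactly because $\sqrt{N}$ can exceed $(T/M)^{3/2}$ that step (i) above is needed. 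Without the large-sieve structure and the functional-equation shortening, the claimed bound does not follow.
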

	
	Theorem \ref{GL3} yields averaged Lindel\"of hypothesis as long as $ M    \geq T^{1/5}$, while it was first achieved for $M    \geq T^{3/8}$ in \citep{Li2011bounds}, and later for $M    \geq T^{1/3}$ and $M    \geq T^{5/16}$  in \citep{mckee2015improved} and \citep{Sun-Ye}, respectively. Moreover,   the following bound was proven in  \citep{Nunes-GL3} by the second named author: 
	\begin{equation}\label{average-Nunes}
		\sum_{ |t_j-T|\leq M   } L\left(1/2,\phi\times f_j\right)+ \int_{T-M   }^{T+M   }\left|L\left(1/2+it,\phi\right)\right|^2dt 
		\ll_{\phi,\varepsilon} M    T^{5/4+\epsilon},
	\end{equation}  
	which is not Lindelöf-on-average for any value of $M$ but leads to stronger subconvexity bounds than its predecessors.  
	
	By choosing $M   =T^{1/5}$ in Theorem \ref{GL3} and using the non-negativity in \eqref{non-negativity}, we deduce the following subconvexity bounds. 
	
	\begin{cor}\label{subconv}
		We have the following bounds:
		$$
		L\left(1/2+it,\phi\right) \ll_{\phi, \epsilon} (|t|+1)^{3/5+\epsilon},\quad L\left(1/2,\phi\times f_j\right)\ll_{\phi, \epsilon} (t_j+1)^{6/5+\epsilon}.
		$$ 
	\end{cor}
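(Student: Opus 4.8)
The plan is to derive Corollary~\ref{subconv} from Theorem~\ref{GL3} by exploiting the non-negativity \eqref{non-negativity}, which lets us discard whichever of the two non-negative pieces on the left of \eqref{1eq: main bound} we do not need, and then by choosing $M=T^{1/5}$ (which indeed lies in the admissible window $T^\epsilon\le M\le T^{1-\epsilon}$ once $\epsilon<1/5$ and $T$ is large) so that $MT^{1+\epsilon}$ and $T^{5/4+\epsilon}M^{-1/4}$ are both $\asymp T^{6/5+\epsilon}$.

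First the $\mathrm{GL}(3)\times\mathrm{GL}(2)$ bound. Fix $f_j$; if $t_j$ is bounded the claim is trivial, so assume $t_j$ large and apply Theorem~\ref{GL3} with $T=t_j$ and $M=T^{1/5}$. By \eqref{non-negativity} every summand in the $t_j$-sum of \eqref{1eq: main bound} is $\ge 0$, and the $t$-integral is trivially $\ge 0$, so the single term indexed by our $f_j$ is bounded by the whole left-hand side; this gives at once $L(1/2,\phi\times f_j)\ll_{\phi,\epsilon}t_j^{6/5+\epsilon}$.

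For $L(1/2+it_0,\phi)$ with $|t_0|=T$ large, I would instead discard the (non-negative, again by \eqref{non-negativity}) spectral sum in \eqref{1eq: main bound}, which leaves $\int_{T-M}^{T+M}|L(1/2+it,\phi)|^2\,dt\ll_{\phi,\epsilon}MT^{1+\epsilon}+T^{5/4+\epsilon}M^{-1/4}$ for every admissible $M$. It then remains to convert this mean-square bound on a short interval around $t_0$ into the pointwise value at $t_0$. Here I would invoke a standard mean-to-max inequality: since $L(s,\phi)$ has analytic conductor $\asymp(1+|t|)^3$, its approximate functional equation expresses $L(1/2+it,\phi)$, uniformly for $|t-t_0|\le M$, as a Dirichlet polynomial of length $\asymp T^{3/2}$ up to a negligible error, hence as an almost-periodic function whose frequencies lie in an interval of length $\ll\log T$; for such a function a band-limited (Beurling--Selberg type) reproducing-kernel argument yields
\[
\bigl|L(1/2+it_0,\phi)\bigr|^2\ll_{\phi,\epsilon}T^\epsilon\Bigl(1+\int_{T-M}^{T+M}\bigl|L(1/2+it,\phi)\bigr|^2\,dt\Bigr).
\]
Taking $M=T^{1/5}$ and combining then gives $|L(1/2+it_0,\phi)|^2\ll_{\phi,\epsilon}T^{6/5+\epsilon}$, which is the asserted bound (and for bounded $|t_0|$ it is trivial).

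The only real subtlety is the mean-to-max step, and it is important to carry it out the right way: a crude Sobolev inequality bringing in $\int|L'(1/2+it,\phi)|^2\,dt$ would cost a full power of $T$ (the derivative effectively having ``length'' $T^{3/2}$ rather than $T^{1/5}$), whereas using that the approximate functional equation is a Dirichlet polynomial of bounded bandwidth costs only $T^\epsilon$. One must also check the approximate functional equation is uniform over $|t-t_0|\le M$; for $M=T^{1/5}$ this is harmless, since on that interval the archimedean factor's phase agrees with a linear exponential up to $O(M^2/T)=O(T^{-3/5})$, which may be absorbed into the weight. Everything else is the elementary optimization of $M$ displayed above.
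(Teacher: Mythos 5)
Your proposal is correct and follows the paper's (essentially one-sentence) deduction: take $M=T^{1/5}$ in Theorem~\ref{GL3}, use Lapid's non-negativity \eqref{non-negativity} to drop all unwanted terms, and observe that both terms on the right of \eqref{1eq: main bound} become $T^{6/5+\epsilon}$. The one step the paper leaves entirely implicit is the mean-to-max passage from $\int_{T-M}^{T+M}|L(1/2+it,\phi)|^2\,dt$ to the pointwise value at $t_0$; your band-limited reproducing-kernel argument via the approximate functional equation is a legitimate way to supply it (the standard alternatives being subharmonicity of $|L|^2$ combined with convexity in the real direction, or the same kernel trick), and your uniformity check of the weight over $|t-t_0|\le M$ is the right thing to verify. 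One side remark is off, though it does not affect your argument: a Sobolev inequality would not cost a full power of $T$, since differentiating the length-$T^{3/2}$ Dirichlet polynomial in $t$ multiplies coefficients by $\log n\ll\log T$, so that route also loses only logarithms.
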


	\subsection{Remarks on the works of Blomer and Young}
	This work is influenced by the ideas of Blomer  \citep{Blomer2012twisted} and Young  \citep{Young2014weyl}, especially the use of the Archimedean large sieve has its roots in the latter.  
	
	In the most simplified setting, their results can be stated as follows. 
	Let  $\chi_q$ denote the non-trivial quadratic character of prime  conductor $q$. 
	Blomer  proved the bounds 
	\begin{equation*}\label{boundsblomer}
		L\left(1/2+it,\phi\times \chi_q\right) \ll_{\phi,t,\epsilon} q^{5/8+\epsilon},\quad L\left(1/2,\phi\times f_j \times \chi_q\right)\ll_{\phi,f_j,\epsilon} q^{5/4+\epsilon},
	\end{equation*} 
	while Young proved the hybrid bounds
	$$
	L\left(1/2+it,\chi_q\right)\ll_{\epsilon} ((|t|+1) q)^{1/6+\epsilon}, \quad L\left(1/2 , f_j\times \chi_q\right)\ll_{\epsilon} ((t_j+1) q)^{1/3+\epsilon}.
	$$
	
	Note that Blomer's exponent $5/4$ (which agrees with \citep{Nunes-GL3}) is weaker than our $6/5$ in Corollary \ref{subconv}. Even though we do not know how to improve the result of Blomer in the case of prime modulus $q$, the techniques in this paper can be adapted to prove 
	$$
	L\left(1/2+it,\phi\times \chi\right) \ll_{\phi,t,\epsilon} q^{3/5+\epsilon}, 
	$$
	for $\chi $ non-trivial characters of split conductor $q=q_1q_2$ with    $q_1$ and $q_2$ coprime, squarefree and in appropriate ranges. This is a work in progress and will appear in a separate paper.   
	
	Finally, we would like to mention that the idea of using Young's method in this $\mathrm{GL}(3)$ context is also present in the work of Bingrong Huang \citep{huang2016hybrid}. Nevertheless he does not improve on the exponent $2/3$ by McKee--Sun--Ye.

	\subsection{Method of the paper} 
	
	This work  belongs to a long line of papers stemmed from the breakthrough work of Conrey and Iwaniec \citep{CI2000}.

	Our initial steps are standard applications of approximate functional equations and the Kuznetsov formula to the spectral mean of $L$-functions for $\mathrm{GL}(3) \times \mathrm{GL}(2)$. This leads to a diagonal term which is easily dealt with and a more complicated off-diagonal term.
	
	The off-diagonal contribution consists of  $\mathrm{GL}(3)$ Fourier coefficients  weighted by Kloosterman sums. As was the case for all the preceding works, we apply the Voronoi formula to the $\mathrm{GL}(3)$ sum, however we do it in a slightly unusual way which leads to great simplification of the terms we deal with.
	
	If one were to apply  the Voronoi formula of Miller--Schmid \citep{MS2006automorphic}, it would be necessary to open the Kloosterman sum and to create rather cumbersome additional variables. 
	Instead, we observe that one can directly apply a (balanced) Voronoi formula of Miller--Zhou \citep{Miller-Zhou} so that some subsequent calculations as in  \citep{Blomer2012twisted} or \citep{Li2011bounds}   can be completely avoided. 
	

	
	The balanced Voronoi formula of Miller and Zhou was applied successfully in an earlier work of Blomer, Xiaoqing Li, and Miller \citep{Blo-Li-Mil} for a problem involving $\mathrm{GL}(4)$ forms. In our $\mathrm{GL}(3)$ setting, it also render us a simple structural insight on the first moment of $\mathrm{GL} (3) \times \mathrm{GL}(2)$ forms. 
	
	For the next step, we depart from the approach of Xiaoqing Li \citep{Li2011bounds} and follow Young \citep{Young2014weyl} instead.  
	Young  has  prepared the ground for us with the method of stationary phase and Mellin transform. As already observed in Conrey--Iwaniec \citep{CI2000}, the (arithmetic) exponential factor and the (analytic) Hankel transform conspire to make a substantial conductor drop. 
	
	Roughly speaking, after Young's analysis, we arrive at
	\begin{equation}\label{dual-moment1}
		M    \int_{-{T}/{M   }}^{{T}/{M   }}  \bigg( \sum_{r \sim \sqrt{N}/T }\frac{1}{r^{1/2+it}} \bigg) \bigg(\sum_{n\sim \sqrt{N}}\frac{A(1,n)}{n^{1/2-it}}\bigg) \lambda (t) dt,
	\end{equation}
	where $N \leq T^{3+\epsilon}$ is the original length of summation for $L (s, \phi \times f_j)$ or $|L (1/2+it, \phi)|^2$, $A(1, n)$ are the Fourier coefficients of $\phi$, and $\lambda (t)$ is a certain bounded smooth weight.  
	
	Except for the simplification arising from the application of the balanced Voronoi formula, the treatment up to this point is the same as the one in \citep{Nunes-GL3}, at which point, an application of the Archimedean large sieve and Cauchy--Schwarz leads to the result \eqref{average-Nunes}.

	Here lies another novelty of this work: We observe that an application of the functional equation for $\phi$ will be beneficial for the $n$-sum if the length  $\sqrt{N}$ is larger than the square root of the conductor $(T/M   )^3$, as it transforms the sum to a dual sum of shorter length. See \eqref{two-cases} and also Remark \ref{rmk: - case} and \ref{rmk: + case}. 
	
	At last, we conclude by appealing to the aforementioned Archimedean large sieve due to Gallagher and the Cauchy--Schwarz inequality.

	\subsection{Remarks on spectral reciprocity} 
	Based on the looks of the expression in \eqref{dual-moment1} and the approximate functional equation, our approach suggests a relation between 
	$$
	\sum_{ |t_j-T|\leq M   } L\left(1/2,\phi\times f_j\right)+ \int_{T-M   }^{T+M   }\left|L\left(1/2+it,\phi\right)\right|^2dt 
	$$
	and the following   integral of product type $\mathrm{GL}(1) \times \mathrm{GL}(3)$: 
	\begin{align}\label{dual integral}
		M    \int_{-{T}/{M   }}^{{T}/{M   }} \zeta(1/2+it)L(1/2-it,\phi) \lambda (t) dt . 
	\end{align}

	This arises in more explicit terms (with both sides weighted by weights that are related by a certain integral transform)  in the recent work of Chung-Hang Kwan \citep{Kwan}.  His work uses period integral representations of $L$-functions  
	and is in tune with recent studies on \emph{spectral reciprocity formulae}. 
	
	It seems that an alternative version of Kwan's formula can still be derived by the   ``Kuznetsov--Voronoi" approach with more elaborate analysis, but this of course is not needed for the subconvexity problem. Theoretically, it is suspected that if Kwan's weight function  is properly chosen and his integral transform is carefully analyzed---the analysis will probably resemble the stationary-phase analysis of Young---then our results might be deduced from his formula.  
	
	Finally we remark that a third proof of such a spectral reciprocity formula is possible, using the Kuznetsov formula and analytic continuation of Dirichlet series; see a forthcoming work by Humphries and Khan.

	\subsection{Strength of our results}
	
	In view of the above discussion, the term $ T^{5/4+\epsilon} / M   ^{1/4} $ in  Theorem \ref{GL3} is tied to  the following (trivial) large-sieve estimate for the second moment of $L (1/2+it, \phi)$:  \begin{align}\label{2nd moment}
		\int_{-U}^{U} \left|L(1/2+it, \phi)\right|^2dt\ll_{\phi} U^{3/2+\epsilon}.
	\end{align} 
	Any improvement over \eqref{2nd moment} directly leads to better subconvexity exponents for $L(1/2+it,\phi)$ and $L(1/2,\phi\times f_j)$. However, to our knowledge, showing such an improvement is a wide open problem. Therefore  we have reached the natural limit of this moment approach, and our subconvexity bounds are strong in some sense.

	Finally, we remark that if  
	the cusp form $\phi$ were replaced by a maximal or minimal parabolic Eisenstein series for $\mathrm{SL}(3,\Zz)$, the $L$-function in \eqref{dual integral} can be factored and a sharper bound will follow from a second moment of $\mathrm{GL}(2)$ $L$-functions for which we know optimal results. In these cases one may get the Lindel\"of-on-average bound $M    T^{1+\epsilon}$ for the integral. In fact, the minimal parabolic situation corresponds to a particular case of the result of Young \citep{Young2014weyl}.

	\section*{Acknowledgments}
	We would like to thank Philippe Michel for his encouragement at the various stages of this project. We are grateful to Peter Humphries for his comments and to the referee for his/her detailed comments and helpful suggestions. We also thank Roman Holowinsky and Ritabrata Munshi for their interests in this work.

	\section{Preliminaries}
	
	We first briefly review the central tools in this paper---the Kuznetsov trace formula for $\mathrm{GL}(2)$, 
	the functional equation and	the  Voronoi summation formula for $\mathrm{GL}(3)$. We refer the reader to \citep{CI2000,GoldfeldGLn,MS2006automorphic,Miller-Zhou} for more details. 
	
	\subsection{Kuznetsov trace formula}
	
	Let $\{f_j\}_{j \geq 1}$ be an orthonormal basis of even Hecke--Maass forms for $\mathrm{SL} (2, \Zz)$. For each $f_j (z)  $ with Laplacian eigenvalue $ 1 / 4 + t_j^2$ ($t_j \geq 0$), it has Fourier expansion of the form
	\begin{align*}
		f_j(z)= 2 \sum_{n\neq 0}\rho_j(n)\sqrt{|n| y} K_{it_j}(2\pi|n|y)e(nx) .
	\end{align*}
	Let $\lambda_j (n)$ ($n \geq 1$) be its Hecke eigenvalues. 
	It is known that $\rho_f (\pm n) = \rho_f (1) \lambda_f (n) / \sqrt{n}$.

	The Kuznetsov trace formula for even Maass form for $\mathrm{SL} (2, \Zz)$ reads as follows (see \cite[(3.17)]{CI2000}). 
	\begin{lem}\label{Kuznetsov}
		Let  $h (t)$ be an even test function satisfying the conditions:
		\begin{enumerate} 
			\item[{\rm (i)}] $h (t)$ is holomorphic in  $|\operatorname{Im}(t)|< {1}/{2}+\epsilon$,
			\item[{\rm (ii)}] $h(t)\ll (|t|+1)^{-2-\epsilon}$ in the above strip. 
		\end{enumerate}
		Then for $n_1, n_2 \geq  1$ 	we have the following  identity: 
		\begin{equation}
			\begin{split}
				\sum_{j \geq 1} h(t_j)\omega_j   \lambda_j(n_1) &  \lambda_j(n_2)+\frac{1}{4\pi}\int_{-\infty}^{\infty}h(t)\omega(t)\tau_{it}(n_1)\tau_{it}(n_2)dt\\
				&= \delta_{n_1,n_2}H +  \sum_{\pm}\sum_{c= 1}^{\infty}\frac{S(n_1,\pm n_2;c)}{c}H^{\pm}\left(\frac{4\pi\sqrt{n_1n_2}}{c}\right),
			\end{split}
		\end{equation}
		where  $\delta_{n_1,n_2}$ is the Kronecker $\delta$-symbol, 
		\begin{align}
			\tau_{s} (n) = \tau_{-s} (n) = \sum_{ab=n} (a/b)^s, 
		\end{align}
		\begin{equation}\label{omegaj}
			\omega_j=\frac{4\pi |\rho_j(1)|^2}{\cosh(\pi t_j)}, \qquad \omega (t) = \frac {4\pi} {|\zeta(1+2it)|^2},
		\end{equation}
		and
		\begin{align}\label{DBB}
			\begin{aligned}
				H&=\frac{1}{\pi}\displaystyle\int_{-\infty}^{\infty}h(t)\tanh(\pi t)tdt,\\
				H^+(x)&= {i}   \int_{-\infty}^{\infty}h(t) \frac{J_{2it}(x)}{\cosh(\pi t)}t dt,\\
				H^-(x)&=\frac{2}{\pi}\int_{-\infty}^{\infty}h(t) K_{2it}(x)\sinh(t)tdt .
			\end{aligned}
		\end{align} 
		
	\end{lem}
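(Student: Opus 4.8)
This is the classical Kuznetsov formula for $\Gamma=\mathrm{SL}(2,\Zz)$, and the plan is to derive it from the spectral theory of the hyperbolic Laplacian on $\Gamma\backslash\Hh$ by evaluating an inner product of two Poincar\'e series in two ways, following Kuznetsov, Bruggeman and Selberg (see also the exposition in \citep{CI2000}). For $\operatorname{Re}(s)$ large and integers $n\geq 1$, I would introduce
$$
P_n(z,s)=\sum_{\gamma\in\Gamma_\infty\backslash\Gamma}\bigl(\operatorname{Im}\gamma z\bigr)^{s}\,e\bigl(n\operatorname{Re}\gamma z\bigr),
$$
with $\Gamma_\infty$ the stabiliser of the cusp at $\infty$. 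Unfolding $\langle P_n(\cdot,s),f_j\rangle$ against a Maass cusp form extracts its $n$-th Fourier coefficient, hence $\overline{\rho_j(n)}$, times a product of two $\Gamma$-functions in $s$ and $t_j$ coming from a Mellin transform of $K_{it_j}$; using $\rho_j(\pm n)=\rho_j(1)\lambda_j(n)/\sqrt n$ this produces the Hecke eigenvalues and the weight $\omega_j$ of \eqref{omegaj}, and the analogous computation against an Eisenstein series produces $\tau_{it}(n)$ and $\omega(t)$.

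Next I would compute $\langle P_{n_1}(\cdot,s_1),P_{n_2}(\cdot,s_2)\rangle$ in two ways. Spectrally, Parseval and the decomposition of $P_{n_1}$ along $\{f_j\}$ and the Eisenstein series give $\sum_j h_{s_1,s_2}(t_j)\omega_j\lambda_j(n_1)\lambda_j(n_2)$ plus a continuous-spectrum integral, where $h_{s_1,s_2}$ is an explicit ratio of $\Gamma$-factors. Geometrically, unfolding one copy and splitting $\Gamma_\infty\backslash\Gamma/\Gamma_\infty$ by the Bruhat decomposition produces a term proportional to $\delta_{n_1,n_2}$ and a sum over moduli $c\geq1$ of $S(n_1,\pm n_2;c)/c$ against a Bessel-type integral in the remaining variable. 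Equating the two expressions yields the trace formula for the particular weight $h_{s_1,s_2}$, the geometric kernels being classical integrals of $J$- and $K$-Bessel functions.

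It then remains to pass from the special weights $h_{s_1,s_2}$ to an arbitrary $h$ satisfying (i)--(ii). This is achieved by taking a suitable contour integral in the auxiliary parameters $s_1,s_2$ of the identity just obtained; this is the inversion of the relevant Sears--Titchmarsh / Kontorovich--Lebedev transform, and it is exactly the operation that turns the $\Gamma$-ratio kernels into $H$ and $H^{\pm}$ as written in \eqref{DBB}. The hypotheses of the lemma are precisely what legitimise this: holomorphy of $h$ in $|\operatorname{Im}(t)|<1/2+\epsilon$ allows the contour to be moved across the relevant poles, while the decay $h(t)\ll(|t|+1)^{-2-\epsilon}$, combined with Weil's bound $S(m,n;c)\ll_\epsilon c^{1/2+\epsilon}$ and standard bounds for $J_{2it}(x)$ and $K_{2it}(x)$ as $x\to0$, as $x\to\infty$, and in the $t$-aspect, secures absolute convergence of the $t$-integrals, of the $c$-sum, and of the continuous part (using $1/|\zeta(1+2it)|\ll\log(|t|+2)$).

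The main obstacle is this final step: justifying the interchange of the $c$-sum, the spectral sum, the $t$-integral and the contour integral in $s_1,s_2$, and performing the contour shifts that identify the geometric integrals with $H^{\pm}$. Since \eqref{DBB} is a standard identity, an alternative is simply to cite the derivation in \citep{CI2000} or in Iwaniec's \emph{Spectral Methods of Automorphic Forms}; the sketch above records the route for completeness.
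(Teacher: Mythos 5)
The paper offers no proof of this lemma: it is quoted directly from Conrey--Iwaniec \citep[(3.17)]{CI2000}, so your fallback of simply citing the standard derivation is exactly what the authors do, and your sketch of the Poincar\'e-series/Bruhat/Kontorovich--Lebedev route is the standard proof found in the cited literature. One point your sketch glosses over: the version stated here is restricted to an orthonormal basis of \emph{even} Maass forms and carries both the $S(n_1,n_2;c)$ term with the $J$-Bessel kernel $H^+$ and the $S(n_1,-n_2;c)$ term with the $K$-Bessel kernel $H^-$. Computing $\langle P_{n_1},P_{n_2}\rangle$ with both indices positive produces only the same-sign Kloosterman sums; to obtain the displayed identity you must also run the computation with one index negative (the opposite-sign formula) and add the two, whereupon the odd forms cancel by $\rho_j(-n)=-\rho_j(n)$ and the even forms survive --- this is where the evenness hypothesis and the simultaneous appearance of $H^{\pm}$ actually come from, and it should be said explicitly rather than absorbed into the notation $S(n_1,\pm n_2;c)$.
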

	
	The spectral weights $\omega_j$ and $\omega (t)$ play a very minor role in our problem in view of (see \cite[Theorem 2]{Iwaniec90} and \cite[Theorem 5.16]{Titchmarsh} respectively)
	\begin{align}\label{2eq: lower bound of omega}
		\omega_j\gg t_j^{-\epsilon}, \qquad \omega(t)\gg t^{-\epsilon} . 
	\end{align}

	\subsection{\texorpdfstring{$\mathrm{GL}(3)$ Maass forms and their $L$-function}{GL(3) Maass forms and their {\it L}-function}}\label{lfunctions}
	
	We refer the reader to  Goldfeld's book \citep{GoldfeldGLn} for the theory of $\mathrm{GL}(3)$ Maass forms.  
	
	Let $\phi$ be a {\it self-dual} Hecke--Maass form for $\mathrm{SL}(3,\Zz)$ of Langlands parameter $(\mu, 0, -\mu)$ and Hecke eigenvalues $A(n_1,n_2)$($=\overline{A(n_1, n_2)}= A(n_2, n_1)$). 
	For later use, we record here the  Rankin--Selberg estimate:
	\begin{equation}\label{RS}
		\sum_{n\leq X}|A(1,n)|^2\ll_{\phi} X.
	\end{equation} 
	
	Define the $L$-function attached to $\phi$ by
	\begin{align}\label{eq: L-series}
		L(s,\phi)=\sum_{n=1}^{\infty}\frac{A(1,n)}{n^s},
	\end{align} 
	for $\mathrm{Re} (s) > 1$, and by analytic continuation for all $s$ in the complex plane. Next, we define  the 
	gamma factor
	\begin{align}\label{eq: gamma}
		\gamma (s, \phi) = \pi^{-3s/2} \Gamma \Big(\frac {s-\mu}2\Big) \Gamma \Big(\frac {s}2\Big) \Gamma \Big(\frac {s+\mu}2\Big). 
	\end{align}
	The functional equation for $ L(s,\phi) $ reads
	\begin{align}\label{functional-eq}
	\gamma (s, \phi) L(s, \phi) =\gamma (1-s, \phi)  L (1-s, \phi). 
	\end{align}
	
	A consequence of the functional equation is the following summation formula, commonly known among specialists.
	
	\begin{lem}\label{summation from FE} For $t, N > 1$ and fixed $w \in C_c^{\infty} (0,\infty)$ we have the identity
		\begin{equation} \label{afe}
			\sum_{n= 1}^{\infty} \frac{A(1,n)}{n^{1/2-it}} w\left( n/N\right)=\sum_{n= 1}^{\infty}\frac{A(1,n)}{n^{1/2+it}}W (nN; t) ,
		\end{equation}
		with 
		\begin{equation}\label{eq: W(y;t) = }
			W (y; t)= \frac 1 {2\pi i} \int_{(0)}  \frac{\gamma(1/2+it-s,\phi)}{\gamma(1/2-it+s,\phi)}\widetilde{w}(s) y^{s} ds, 
		\end{equation} 
		where $\widetilde{w} $ is the Mellin transform of $w $. Moreover, we have
		\begin{align}\label{bound for Wt(y)}
			W (y; t) \ll_{A, \mu,  w} \left( 1 + \frac {y} {t^{3}} \right)^{-A} . 
		\end{align}
	\end{lem}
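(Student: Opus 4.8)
The plan is to run the standard approximate-functional-equation argument, arranged so that the dual weight comes out on the line $\operatorname{Re}(s)=0$. Since $w\in C_c^\infty(0,\infty)$, its Mellin transform $\widetilde w$ is entire and decays faster than any polynomial in vertical strips. First I would write $w(n/N)=\frac{1}{2\pi i}\int_{(1/2+\epsilon)}\widetilde w(s)(n/N)^{-s}\,ds$ and interchange the $n$-sum with the $s$-integral; this is legitimate because $\sum_n|A(1,n)|n^{-1-\epsilon}<\infty$ (from \eqref{RS} and Cauchy--Schwarz) while $\widetilde w$ is rapidly decreasing. On $\operatorname{Re}(s)=1/2+\epsilon$ the $n$-sum converges absolutely to $L(1/2-it+s,\phi)$, so
\[
\sum_{n\ge1}\frac{A(1,n)}{n^{1/2-it}}\,w(n/N)=\frac{1}{2\pi i}\int_{(1/2+\epsilon)}\widetilde w(s)\,N^{s}\,L(1/2-it+s,\phi)\,ds .
\]
As $\phi$ is cuspidal, $L(\cdot,\phi)$ is entire, so I may shift the contour to $\operatorname{Re}(s)=-1/2-\epsilon$, the horizontal segments vanishing by the rapid decay of $\widetilde w$ against the polynomial growth of $L$ and of $N^{s}$.

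On the new line I would apply the functional equation \eqref{functional-eq} in the form $L(1/2-it+s,\phi)=G(s)\,L(1/2+it-s,\phi)$ with $G(s):=\gamma(1/2+it-s,\phi)/\gamma(1/2-it+s,\phi)$, using $1-(1/2-it+s)=1/2+it-s$. Since now $\operatorname{Re}(1/2+it-s)=1+\epsilon>1$, the series for $L(1/2+it-s,\phi)$ converges absolutely; expanding it and interchanging once more (the same Fubini justification, as $|G(s)|$ grows only polynomially on this line by Stirling) gives $\sum_{n\ge1}A(1,n)n^{-1/2-it}\,V(nN;t)$ with $V(y;t)=\frac{1}{2\pi i}\int_{(-1/2-\epsilon)}\widetilde w(s)\,G(s)\,y^{s}\,ds$. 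Finally I move this contour to $\operatorname{Re}(s)=0$. The poles of $G$ can only come from the $\Gamma$-factors of $\gamma(1/2+it-s,\phi)$ in \eqref{eq: gamma}, located at $\operatorname{Re}(s)\in\{1/2-\operatorname{Re}\mu+2k,\ 1/2+2k,\ 1/2+\operatorname{Re}\mu+2k:k\ge0\}$, all of which lie in $\operatorname{Re}(s)>0$ because $|\operatorname{Re}\mu|<1/2$ (a classical consequence of the theory, e.g.\ Jacquet--Shalika; with Kim--Sarnak one even has $|\operatorname{Re}\mu|\le 5/14$). Hence the strip $-1/2-\epsilon\le\operatorname{Re}(s)\le0$ contains no pole, so $V(y;t)=W(y;t)$, which is \eqref{afe}--\eqref{eq: W(y;t) = }.

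For the bound \eqref{bound for Wt(y)} I would estimate $W$ on two lines. On $\operatorname{Re}(s)=0$: self-duality forces the Langlands parameter $\mu$ to be real or purely imaginary, so $\gamma(\bar s,\phi)=\overline{\gamma(s,\phi)}$; writing $s=i\tau$, the denominator $\gamma(1/2-i(t-\tau),\phi)$ is the complex conjugate of the numerator $\gamma(1/2+i(t-\tau),\phi)$, whence $|G(i\tau)|\equiv1$. Together with the rapid decay of $\widetilde w$ this gives $W(y;t)\ll_w1$, which is the claimed bound whenever $y\le t^{3}$. For $y\ge t^{3}$ I would instead shift to $\operatorname{Re}(s)=-A$ (again pole-free, by the pole location above), where Stirling's formula gives $|G(-A+i\tau)|\ll_{A,\mu}(1+|t-\tau|)^{3A}$ — the exponential factors from the six $\Gamma$-functions cancel exactly and the net power of $|t-\tau|$ is $3A$. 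Since $|y^{s}|=y^{-A}$ there, and $\widetilde w(-A+i\tau)$ decays faster than any power of $|\tau|$ (which controls the ranges $|\tau|\gtrsim t$, where $G$ is largest), splitting the integral at $|\tau|\asymp t$ yields $W(y;t)\ll_{A,\mu,w}y^{-A}t^{3A}$. Combining the two regimes gives $W(y;t)\ll_{A,\mu,w}(1+y/t^{3})^{-A}$.

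The routine ingredients are the two applications of Fubini and the vanishing of the horizontal segments, both standard given \eqref{RS} and the decay of $\widetilde w$. The one point that genuinely requires care — and the main obstacle to writing the proof cleanly — is tracking the poles of the ratio of gamma factors $G$: it is what dictates pushing the contour to $\operatorname{Re}(s)=-1/2-\epsilon$ after the functional equation and returning to $\operatorname{Re}(s)=0$ (and later to $\operatorname{Re}(s)=-A$) from the left, never crossing a pole, and it rests on the elementary bound $|\operatorname{Re}\mu|<1/2$. The remaining analytic content — the identity $|G|\equiv1$ on the central line and the Stirling estimate $|G(-A+i\tau)|\ll(1+|t-\tau|)^{3A}$ — is the heart of the matter but is entirely standard.
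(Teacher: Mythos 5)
Your argument is correct and is essentially the paper's own proof: Mellin inversion, a contour shift to the left, the functional equation \eqref{functional-eq}, a shift back to $\operatorname{Re}(s)=0$, and then the bound via unit modulus of the gamma quotient on the central line plus a Stirling-controlled shift to $\operatorname{Re}(s)=-A$. The only (immaterial) difference is that the paper works on the lines $\operatorname{Re}(s)=\pm3$ rather than $\pm(1/2+\epsilon)$, and your extra care about the pole locations of the gamma quotient and the $|\tau|\asymp t$ range of the Stirling estimate simply fills in details the paper leaves to the reader.
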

	
	\begin{proof}
	We shall only give a sketch of proof following the arguments as in the proofs of Theorem 5.3 and Proposition 5.4 in \citep{IK-analytic}.

	First, in view of \eqref{eq: L-series}, we may use   the inverse Mellin transform to rewrite	the left hand side of \eqref{afe}  as
		\begin{align*}
	\frac{1}{2\pi i}\int_{(3)}\widetilde{w}(s)N^sL(1/2-it+s,\phi)ds.
	\end{align*}
	Next, we shift the integral contour  to $\mathrm{Re} (s)= - 3$ (note that $\widetilde{w}(s) $ decays rapidly in $\mathrm{Im}(s)$) and then apply the functional equation \eqref{functional-eq}, giving
		\begin{align*}
	\frac{1}{2\pi i}\int_{(-3)}\widetilde{w}(s)N^s\frac{\gamma(1/2+it-s,\phi)}{\gamma(1/2-it+s,\phi)}L(1/2+it-s,\phi)ds.
	\end{align*}
	This gives us the right hand side of \eqref{afe} by inserting \eqref{eq: L-series} and shifting the integral contour to $\mathrm{Re}(s) = 0$.  
	
	Finally, the gamma quotient in \eqref{eq: W(y;t) = } is of unity norm  whenever $\mathrm{Re}(s)=0$ due to \eqref{eq: gamma}, so it is clear that $W (y; t)$ is always bounded, while $W (y; t) \ll (y/t^3)^{-A} $ follows from  Stirling's formula and by shifting the integral contour in \eqref{eq: W(y;t) = } to the far left $\mathrm{Re}(s) = - A$. Therefore  the estimate in \eqref{bound for Wt(y)} follows.

	\end{proof}
	
	The estimate in \eqref{bound for Wt(y)} indicates that the sum on the right of \eqref{afe}  can be effectively truncated at $ t^{3+\epsilon}/ N$.

	\subsection{\texorpdfstring{Reversed  Voronoi summation formula for $\mathrm{GL}(3)$}{Reversed  Voronoi summation formula for GL(3)}}

	The Voronoi formula for   $\mathrm{GL}(3)$  in the next lemma may be considered as the reverse of that of Miller and Schmid \citep{MS2006automorphic}. It is also a special case of the balanced Voronoi formula of Miller and Fan Zhou \citep[Theorem 1.1]{Miller-Zhou}.
	For simplicity, we again assume that $\phi$ is self-dual. 
	\begin{lem}\label{voronoi}
		
		For $w  \in C_c^{\infty} (0, \infty)$ define its Hankel transform $W $ by
		\begin{align}
			W (\pm y) =  \frac{1}{2\pi i}\int_{(-3)}G^{\pm}(s)\widetilde{w}(s) y^{s-1}ds  ,
		\end{align}
		where  $\widetilde{w} $ is the Mellin transform of $w $, and
		\begin{equation}
			G^{\pm}(s)=\frac{\gamma(1-s, {\phi})}{\gamma(s,{\phi})}\pm i^3 \frac{\gamma(2-s, {\phi})}{\gamma(1+s, {\phi})} . 
		\end{equation}
		Let $a, \overline{a}, c, m$ be integers with   $a \overline{a} \equiv 1 (\mathrm{mod}\, c)$ and $c, m > 0$.	Then    we have 
		\begin{equation}
			\begin{split}
				\sum_{n_2\mid cm}\sum_{n_1=1}^{\infty}n_2 A(n_1,n_2)& S\left(n_1,\overline{a}m;cm/n_2\right) w(n_1n_2^2)\\
				& =\sum_{\pm}\sum_{n=1}^{\infty}\frac{A(m,n)}{c}e\left(\pm \frac{{a}n}{c}\right){W} \left(\pm \frac{n}{c^3m}\right).  
			\end{split}
		\end{equation} 
	\end{lem}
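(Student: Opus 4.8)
The plan is to derive this reversed Voronoi formula directly from the classical Miller–Schmid (or Goldfeld–Li) $\mathrm{GL}(3)$ Voronoi formula by running it in reverse, i.e.\ by viewing the asserted identity as the statement that two Dirichlet series (or rather two sums weighted by a test function and its Hankel transform) coincide. Concretely, I would start from the standard $\mathrm{GL}(3)$ Voronoi formula in the form due to Miller--Schmid \citep{MS2006automorphic}: for $\psi \in C_c^\infty(0,\infty)$ and $(d,c)=1$,
\begin{equation*}
\sum_{n\geq 1} A(m,n)\, e\!\left(\frac{dn}{c}\right)\psi(n)
= c \sum_{\pm}\sum_{n_2\mid cm}\sum_{n_1\geq 1} \frac{A(n_1,n_2)}{n_1 n_2}\, S\!\left(\overline{d}m,\pm n_1; cm/n_2\right)\Psi^{\mp}\!\left(\frac{n_1 n_2^2}{c^3 m}\right),
\end{equation*}
where $\Psi^{\pm}$ is the relevant Hankel-type transform of $\psi$ expressed via the gamma ratios $\gamma(s,\phi)$. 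The key observation is that this is a \emph{linear relation between two families of sums} indexed by the test function, and since $w$ ranges over all of $C_c^\infty(0,\infty)$ (equivalently $\widetilde w$ over a space of rapidly decaying holomorphic functions), one may invert the transform: choose $\psi$ so that its Hankel transform $\Psi^{\pm}$ plays the role of the input $w$, and then the roles of the two sides swap. This is exactly the mechanism behind Miller--Zhou's balanced formulation \citep{Miller-Zhou}, of which our Lemma is the special case obtained by specializing their general additive twist and matching the $\pm$ conventions with the factor $i^3$.

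The concrete steps I would carry out are: (1) Write down the Miller--Schmid formula precisely with our normalization of $\gamma(s,\phi)$ in \eqref{eq: gamma}, being careful about the self-dual simplification $A(n_1,n_2)=A(n_2,n_1)$ and about the appearance of $\overline a$ versus $a$; here the hypothesis $a\overline a\equiv 1\ (\mathrm{mod}\ c)$ lets us replace $d$ by $\overline a$ and $\overline d$ by $a$. (2) Express both transforms as Mellin--Barnes integrals: the Miller--Schmid transform of $\psi$ is $\frac{1}{2\pi i}\int_{(\sigma)}\frac{\gamma(1-s,\phi)}{\gamma(s,\phi)}\widetilde\psi(s)x^{-s}\,ds$ up to the dual-term bookkeeping, and one checks that composing this with another such transform (the one sending $w\mapsto W$ with kernel $G^\pm(s)$) returns the identity, because the product of the two gamma ratios telescopes. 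The cleanest way is to verify that the kernel $G^\pm(s)$ in our Lemma is precisely the ``square root'' making the composition involutive — i.e.\ that $G^\pm$ is the kernel whose associated integral transform, applied twice (with the contour shift across the relevant poles accounting for the $\pm i^3$ and the two terms), is the identity on $C_c^\infty$. (3) Combine: apply Miller--Schmid to the inner $n_1$-sum on the left-hand side of our claimed identity with $\psi$ chosen as the Hankel dual of $w$, collect the Kloosterman sums $S(n_1,\overline a m; cm/n_2)$, and read off that the result is the right-hand side $\sum_\pm \sum_n \frac{A(m,n)}{c} e(\pm an/c) W(\pm n/(c^3 m))$. The divisor condition $n_2\mid cm$ and the weight $n_2 A(n_1,n_2)$ (rather than $A(n_1,n_2)/(n_1 n_2)$) on the left are exactly what one gets after the $s\mapsto$ dual-variable change, since $w(n_1 n_2^2)$ on the left corresponds to evaluating at $n_1 n_2^2$ rather than $n_1 n_2^2/(c^3 m)$.

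The main obstacle I anticipate is bookkeeping rather than conceptual: getting all the normalizations exactly right — the power of $c$, the precise gamma ratios under our choice \eqref{eq: gamma} with Langlands parameter $(\mu,0,-\mu)$, the $i^3$ sign in $G^\pm$, and the matching of the $\pm$ labels between the additive character $e(\pm an/c)$ and the $\pm$ in $W(\pm y)$. One must be vigilant that the contour $\mathrm{Re}(s)=-3$ in the definition of $W$ lies to the left of all poles of $G^\pm(s)\widetilde w(s)$ coming from the gamma factors, so that the Mellin inversion and the interchange of summation and integration are justified; the rapid decay of $\widetilde w$ in vertical strips (from $w\in C_c^\infty$) handles convergence, and the Rankin--Selberg bound \eqref{RS} together with the decay of the Bessel-type kernel of $W$ (standard for such $G^\pm$) ensures absolute convergence of the dual sum. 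An alternative, if one wants to avoid re-deriving everything, is simply to cite \citep[Theorem 1.1]{Miller-Zhou} and specialize: their balanced formula is stated for a general automorphic form on $\mathrm{GL}(3)$ with an arbitrary admissible test function, and setting the form to be our self-dual $\phi$, the modulus data to $(a,\overline a,c,m)$, and the dual weight to $w$ yields our Lemma verbatim after the self-dual simplification of the dual Fourier coefficients. I would present the self-contained derivation as the main argument and remark that it also follows from Miller--Zhou.
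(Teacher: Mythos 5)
The paper offers no proof of this lemma at all: it is quoted directly as a special case of the balanced Voronoi formula of Miller--Zhou \citep[Theorem 1.1]{Miller-Zhou} (the text immediately preceding the statement says exactly this), so your ``alternative'' route --- cite Miller--Zhou and specialize to a self-dual form, $m$-twist, and moduli $(a,\overline a,c)$ --- is precisely what the paper does, and is sufficient.

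Your primary, self-contained sketch (invert Miller--Schmid by choosing $\psi$ whose Hankel transform is $w$) is the right conceptual picture but has a genuine soft spot at step (2). The dual side of the Miller--Schmid formula is a \emph{mixture} of the two components $\Psi^{+}$ and $\Psi^{-}$ paired with $S(\overline d m,+n_1;\cdot)$ and $S(\overline d m,-n_1;\cdot)$ respectively, so you cannot invert term by term: prescribing the pair $(\Psi^{+},\Psi^{-})$ and recovering $\psi$ amounts to inverting a $2\times 2$ system of Mellin--Barnes kernels, and the claim that ``the product of the two gamma ratios telescopes'' is exactly the nontrivial involutivity of the $\mathrm{GL}(3)$ Hankel transform (equivalently, the local functional equations with the correct $\pm i^3$ signs); this needs a proof, not just the observation $\frac{\gamma(1-s)}{\gamma(s)}\cdot\frac{\gamma(s)}{\gamma(1-s)}=1$ for a single kernel. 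There is also a contour issue: with $w\in C_c^\infty$, the inverse Hankel transform $\psi$ is no longer compactly supported, so re-applying Miller--Schmid to it requires the extension of that formula to non-compactly-supported test functions with controlled decay. All of this is resolved in Miller--Zhou's own proof, which works with the functional equations of the additively twisted $L$-series and a contour shift rather than by formally composing transforms; if you want a self-contained argument you should follow that route. Otherwise, the citation is the honest and standard proof, and it is the one the paper uses.
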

	
	According to \citep[\S \S 3.3, 14]{Qi-Bessel}, there is a Bessel kernel $J_{\phi} (x)$  attached to $\phi$  so that the Hankel transform may indeed be realized as an integral transform
	\begin{align}
		W (y) = \int_{0}^{\infty} w (x) J_{\phi} (- x y) d x,
	\end{align}
	and  the following asymptotic expansion holds:
	\begin{align}\label{4eq: asymptotic, Bessel, R} 
		J_{\phi}  (\pm x ) & =     \frac { { e  \left(   \pm  3 x^{1/3}  \right)   }}  {x^{1/3} }   \sum_{k= 0}^{K-1} \frac {B^{\pm}_{k } }  {x^{  k/3  }}   +  O_{K, \mu} \bigg( \frac 1 {x^{ (K + 1) / 3}} \bigg), 
	\end{align}
	for $x > 1$, where $B^{\pm}_{k }$ are some constants depending on the spectral parameters of $\phi$. Observe that the Bessel kernel	$J_{\phi} (x)$ has the same type of asymptotic as the double integral in \citep[Lemma 6.5]{Young2014weyl} arising from triple Fourier transform.  
	

	\subsection{Approximate functional equations} 
	The $L$-function $L(s, \phi)$ for the $\mathrm{GL}(3)$ Maass form $\phi$ has been introduced in \S \ref{lfunctions}, and it is known that for $\mathrm{Re}(s)>1$,
	\begin{align}
		L(s+it,\phi) L(s-it, \phi)=  \sum_{n_1=1}^{\infty} \sum_{n_2=1}^{\infty}\frac{A(n_1,n_2)\tau_{it}(n_1)}{(n_1 n_2^2)^{s}} . 
	\end{align} The other main actor is the $L$-function of the Rankin--Selberg convolution $   \phi \times f_j$ given, for $\mathrm{Re}(s)>1$, by
	\begin{equation}\label{RSL-function}
		L(s,\phi\times f_j)=\sum_{n_1=1}^{\infty} \sum_{n_2=1}^{\infty}\frac{A(n_1,n_2)\lambda_j (n_1)}{(n_1 n_2^2)^s}.  
	\end{equation}
	According to \citep[Theorem 5.3]{IK-analytic}, since $f_j$ is assumed to be even, we have the following approximate functional equations: 
	
	\begin{align}\label{AFE-Maass}
		L(1/2,\phi\times f_j)=2\sum_{n_1=1}^{\infty} \sum_{n_2=1}^{\infty}\frac{A(n_1,n_2)\lambda_j (n_1)}{(n_1 n_2^2)^{1/2}} V (n_1 n_2^2; t_j ),
	\end{align}
	and
	\begin{align}\label{AFE-Eisenstein}
		\left|L(1/2+it,\phi)\right|^2=2 \sum_{n_1=1}^{\infty} \sum_{n_2=1}^{\infty} \frac{A(n_1,n_2)\tau_{it}(n_1)}{(n_1 n_2^2)^{1/2}} V (n_1 n_2^2; t) , 
	\end{align} 
	where
	\begin{align}
		V  (y; t) = \frac 1 {2\pi i} \int_{(3)}  G  (u, t) y^{-u} \frac {du} {u},   
	\end{align} 
	and 
	\begin{align}
		G (u, t) = \frac {\gamma (1/2+it +u, \phi) \gamma (1/2-it +u, \phi) } {\gamma (1/2+it , \phi) \gamma (1/2-it , \phi) } \cdot \exp(u^2). 
	\end{align}
	
	The following lemma is essentially from \cite[Proposition 5.4]{IK-analytic} and \cite[Lemma 1]{Blomer2012twisted}. 
	\begin{lem}\label{short-int-over-u}  
		Let  $ U  > 1 $.  We have 
		\begin{align}\label{bound for Vt(y)}
			V  (y; t ) \ll_{ A }  
			\bigg(  1 + \frac {y} { (|t|+1)^3  } \bigg)^{-A}, 
		\end{align}
		and
		\begin{align}
			\label{approx of Vt(y)} 
			V (y; t) & = \frac 1 {2   \pi i } \int_{ \epsilon - i U}^{\epsilon + i U}  G  (u, t)    y^{ - u}   \frac {d u} {u} + O_{\epsilon } \bigg( \frac {(|t|+1)^{  \epsilon} } {y^{ \epsilon} e^{U^2/2  } } \bigg) .
		\end{align}  
		
	\end{lem}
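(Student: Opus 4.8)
Both assertions are standard consequences of Stirling's formula for the gamma quotient in $G(u,t)$ together with contour shifts, as in \cite[Proposition 5.4]{IK-analytic} and \cite[Lemma 1]{Blomer2012twisted}; the plan is as follows. The key input is a bound for $G(u,t)$ on vertical lines. Writing $u = \sigma + iv$, by \eqref{eq: gamma} the poles of $\gamma(1/2 \pm it + u, \phi)$ all have negative real part, so $G(u,t)$ is holomorphic in $\mathrm{Re}(u) > -\delta_0$ for some $\delta_0 > 0$, with $G(0,t) = 1$. The quotient $\gamma(1/2 \pm it + u, \phi)/\gamma(1/2 \pm it, \phi)$ is a product of three ratios $\Gamma\big((1/2 \pm it + u + \ast)/2\big)/\Gamma\big((1/2 \pm it + \ast)/2\big)$; applying Stirling to each --- in the range $|v| \lesssim |t|$ the exponential $v$-factors arising from the two signs $\pm it$ cancel and the quotient is $\asymp (|t|+1)^{3\sigma}$, while in the range $|v| \gg |t|$ the numerator is exponentially small in $|v|$ --- and multiplying by $|\exp(u^2)| = e^{\sigma^2 - v^2}$ yields
\[
	\big| G(\sigma + iv, t)\big| \ll_{\sigma} (|t|+1)^{3\sigma}\, e^{-v^2},
\]
uniformly for $\sigma$ in any fixed bounded interval inside $(-\delta_0, \infty)$.

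For \eqref{bound for Vt(y)}, shift the contour in the definition of $V(y;t)$ to the right, to $\mathrm{Re}(u) = A$ with $A$ arbitrarily large; no pole is crossed and the bound above gives $V(y;t) \ll_A \big((|t|+1)^3/y\big)^A$. Shifting instead to the left, to $\mathrm{Re}(u) = -\delta$ with $0 < \delta < \delta_0$, crosses only the pole of $1/u$ at $u = 0$, of residue $G(0,t) = 1$, so $V(y;t) = 1 + O_\delta\big((|t|+1)^{-3\delta} y^{\delta}\big)$, which is $\ll 1$ whenever $y \leq (|t|+1)^3$. Combining the two estimates gives $V(y;t) \ll_A \big(1 + y/(|t|+1)^3\big)^{-A}$.

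For \eqref{approx of Vt(y)}, shift the contour from $\mathrm{Re}(u) = 3$ to $\mathrm{Re}(u) = \epsilon$; since $\epsilon > 0$ no pole is crossed, so $V(y;t) = \frac{1}{2\pi i}\int_{(\epsilon)} G(u,t)\, y^{-u}\, \frac{du}{u}$. The part with $|\mathrm{Im}(u)| \leq U$ is exactly the main term of \eqref{approx of Vt(y)}, and the tail $|\mathrm{Im}(u)| = |v| > U$ is bounded, using the size estimate for $G$ together with $1/|u| \leq 1/\epsilon$ and $e^{-v^2} \leq e^{-U^2/2} e^{-v^2/2}$, by $\ll_\epsilon (|t|+1)^{3\epsilon}\, y^{-\epsilon}\, e^{-U^2/2}$; since $\epsilon > 0$ is arbitrary and $U > 1$, this is the stated error term.

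The only step that needs genuine care is the uniformity in $t$ of the size estimate for $G$, particularly in the regime $|v| \gg |t|$, where $\gamma(1/2 \pm it + u, \phi)$ is compared against the $v$-independent quantity $\gamma(1/2 \pm it, \phi)$ and one must check that the outcome is dominated by the Gaussian $e^{-v^2}$ coming from $\exp(u^2)$ --- which it is, comfortably, since there the gamma ratio is itself exponentially decaying in $|v|$. All remaining manipulations are routine contour shifts, and the spectral parameter $\mu$ enters only through the fixed location of the poles of $\gamma(\cdot,\phi)$.
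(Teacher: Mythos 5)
Your argument is correct and is exactly the standard Stirling-plus-contour-shift proof that the paper itself does not spell out but delegates to its references (\cite[Proposition 5.4]{IK-analytic} and \cite[Lemma 1]{Blomer2012twisted}). The only points worth noting are cosmetic: holomorphy of $G(u,t)$ in $\mathrm{Re}(u)>-\delta_0$ implicitly uses a nontrivial bound towards Ramanujan for $\mathrm{Re}(\mu)$ (automatic here since self-dual $\mathrm{GL}(3)$ forms are symmetric-square lifts), and your tail estimate naturally produces $(|t|+1)^{3\epsilon}$ rather than $(|t|+1)^{\epsilon}$, which is absorbed by the usual $\epsilon$-convention.
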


	\subsection{The large sieve} 
	
	The following   is  Gallagher's large sieve inequality \citep[Theorem 2]{gallagher1970large} in the case $q=1$.
	
	\begin{lem}\label{lem: large sieve}
		Let $a_n$ be a sequence of complex numbers.	For $T > 1$ we have
		\begin{align}
			\int_{-T}^T \bigg|\sum_{n} a_n n^{it} \bigg|^2 d t \ll \sum_{n} (T + n) |a_n|^2, 
		\end{align}
		provided that the sum of $|a_n|$ is bounded. 
	\end{lem}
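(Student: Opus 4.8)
The plan is to recognize $S(t):=\sum_n a_n n^{it}$ as a generalized Dirichlet polynomial with real frequencies $\lambda_n:=(\log n)/2\pi$ and to invoke the Montgomery--Vaughan mean value theorem for such sums. First I would dispose of convergence issues: since $\sum_n|a_n|<\infty$ by hypothesis, the partial sums $S_M(t):=\sum_{n\leq M}a_n n^{it}$ converge to $S(t)$ uniformly on $\Rr$, so $\int_{-T}^{T}|S(t)|^2\,dt=\lim_{M\to\infty}\int_{-T}^{T}|S_M(t)|^2\,dt$. It therefore suffices to bound $\int_{-T}^{T}|S_M(t)|^2\,dt$ by $\sum_n(T+n)|a_n|^2$ with an implied constant independent of $M$ (and if this quantity is infinite there is nothing to prove).

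Next I would apply the Montgomery--Vaughan inequality to the finite exponential sum $S_M(t)=\sum_{n\leq M}a_n e(\lambda_n t)$: for distinct real frequencies $\lambda_n$ with minimal gaps $\delta_n:=\min_{m\leq M,\ m\neq n}|\lambda_n-\lambda_m|$ one has
\begin{align*}
\int_{-T}^{T}\bigg|\sum_{n\leq M}a_n e(\lambda_n t)\bigg|^2 dt=\sum_{n\leq M}|a_n|^2\bigl(2T+O(\delta_n^{-1})\bigr),
\end{align*}
with an absolute implied constant. The remaining input is the elementary estimate $\delta_n^{-1}\ll n$, uniform in $n$ and $M$: among all frequencies $\{\lambda_m:m\geq 1\}$ the neighbours of $\lambda_n$ are $\lambda_{n\pm 1}$, and $\lambda_{n+1}-\lambda_n=\tfrac{1}{2\pi}\log(1+1/n)\geq\tfrac{1}{2\pi(n+1)}$ (using $\log(1+x)\geq x/(1+x)$), while for $n=1$ the gap $\lambda_2-\lambda_1=(\log 2)/2\pi$ is a positive absolute constant; since passing to the subset $\{m\leq M\}$ can only enlarge the minimal gap, $\delta_n\gg 1/n$ and hence $\delta_n^{-1}\ll n$. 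Substituting this back gives $\int_{-T}^{T}|S_M(t)|^2\,dt\ll\sum_{n\leq M}(T+n)|a_n|^2\leq\sum_n(T+n)|a_n|^2$, and letting $M\to\infty$ finishes the proof.

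I do not anticipate a genuine obstacle here: the analytic content is contained entirely in the Montgomery--Vaughan theorem (equivalently, in the Hilbert-type inequality underlying it), and the only mild care needed is the passage to the infinite sum and the verification that $\delta_n^{-1}\ll n$ holds with an absolute constant down to $n=1$. One could alternatively follow Gallagher's original route, deducing the bound from his $L^2$ Sobolev-type lemma together with the additive large sieve for well-spaced points; this reproves Montgomery--Vaughan in the form needed but is no shorter, so I would prefer to cite it directly.
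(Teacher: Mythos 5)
Your proof is correct. Note, however, that the paper does not actually prove this lemma: it is quoted directly as Theorem 2 (with $q=1$) of Gallagher's paper \emph{A large sieve density estimate near $\sigma=1$}, so the "paper's route" is Gallagher's original argument, which runs through his $L^2$ Sobolev-type lemma
\begin{align*}
\int_{-T}^{T}\Big|\sum_n a_n e(\lambda_n t)\Big|^2\,dt \ll T^2\int_{-\infty}^{\infty}\Big|\sum_{|\lambda_n-x|\le 1/2T}a_n\Big|^2\,dx,
\end{align*}
followed by Cauchy--Schwarz and the count that at most $O(1+n/T)$ integers $m$ have $\log m$ within $1/T$ of $\log n$. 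You instead invoke the Montgomery--Vaughan mean value theorem with frequencies $\lambda_n=(\log n)/2\pi$ and the gap estimate $\delta_n^{-1}\ll n$. Both are standard and give the same quality of bound here; Gallagher's route is more elementary (no Hilbert-type inequality needed) and is what the cited reference actually does, while yours outsources the analytic content to a stronger black box but yields an asymptotic with explicit error rather than just an upper bound. Your handling of the side issues --- uniform convergence of the partial sums under $\sum|a_n|<\infty$, the fact that restricting to $m\le M$ only enlarges the minimal gap, and the $n=1$ endpoint where $\lambda_1=0$ --- is all correct, so there is nothing to fix.
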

	
	\section{Initial steps}

	The initial steps---applications of Kuznetsov and Voronoi---are now considered standard in the literature. However, we depart from previous work in that the Voronoi formula is used in the reversed direction. This crucially simplifies subsequent computations.

	Consider the smoothed spectral mean of $L$-values 
	\begin{align*}
		\mathcal{M}=\sum_{j } k(t_j)\omega_jL\left({1}/{2},\phi\times f_j\right)+ 
		\frac{1}{4\pi}\int_{-\infty}^{\infty}k(t)\omega(t)\left|L\left({1}/{2}+it,\phi\right)\right|^2dt, 
	\end{align*}
	where the spectral weights $\omega_j$ and $\omega (t)$ are given in \eqref{omegaj}, and the test function $k(t)$ is defined by
	\begin{equation*}
		k(t)=e^{- (t  - T)^2 / M   ^2} + e^{-(t + T)^2 / M   ^2}.
	\end{equation*}
	In view of \eqref{non-negativity} and \eqref{2eq: lower bound of omega},  the bound \eqref{1eq: main bound} in Theorem \ref{GL3} follows if we can prove 
	\begin{equation}\label{whatwillprove}
		\mathcal{M}\ll M    T^{1+\epsilon} + \frac {T^{5/4+\epsilon}}{M   ^{1/4}}. 
	\end{equation}
	Next, we use the approximate functional equations \eqref{AFE-Maass} and \eqref{AFE-Eisenstein} to write 
	\begin{align*}
		\mathcal{M} = 2	\sum_{n_1=1}^{\infty} \sum_{n_2=1}^{\infty} \frac {A(n_1, n_2)} {(n_1 n_2^2)^{1/2}} \Bigg( & \sum_{j} k(t_j)\omega_j  \lambda_j(n_1) V  (n_1n_2^2; t_j )\\
		&	+\frac{1}{4\pi}\int_{-\infty}^{\infty}k(t)\omega(t) \tau_{it}(n_1) V  (n_1n_2^2; t) dt \Bigg).
	\end{align*}
	Moreover, in view of \eqref{bound for Vt(y)}  in Lemma \ref{short-int-over-u}, we can truncate the $(n_1, n_2)$-sum at $n_1 n_2^2 \leq T^{3+\epsilon}$,  at the cost of a negligible error.  
	
	\subsection{Applying the Kuznetsov formula} 
	Applying the Kuznetsov formula as in Lemma \ref{Kuznetsov} with $n_2=1$, we obtain certain diagonal   and off-diagonal sums. 
	
	The diagonal sum reads
	\begin{align*}
		\mathcal{D} = \frac {2}{\pi} \sum_{  n_2 \leq  T^{3/2 +\epsilon} } \frac {A(1, n_2)} {  n_2 }  \displaystyle\int_{-\infty}^{\infty}k(t) V  ( n_2^2; t  ) \tanh(\pi t)tdt .
	\end{align*}
	It follows from \eqref{RS}, \eqref{bound for Vt(y)}, and  Cauchy--Schwarz,  that
	\begin{align*}
		\mathcal{D} \ll M    T^{1+\epsilon}, 
	\end{align*}
	which is the first term on the right of \eqref{whatwillprove}.

	For the off-diagonal sum, some preparation is required before we proceed to use the Voronoi formula. First, following Blomer \cite[\S 5]{Blomer2012twisted}, we express $V (n_1 n_2^2, t)$ in the Bessel integrals by \eqref{approx of Vt(y)} in Lemma \ref{short-int-over-u}, choosing $U = \log T$ so that the error term is negligible.  Second, we dyadically decompose the variable $n =n_1 n_2^2$. Third, we introduce a new variable $r = c n_2$. Our task is then reduced to consider sums of the   form 
	\begin{equation}\label{eq: Spm}
		\mathcal{S}_{\pm}(N)=\hskip -1pt   \sum_{r=1}^{\infty}\frac{1}{r}\sum_{n_2\mid r}  \sum_{n_1=1}^{\infty}n_2 A(n_1,n_2)  S( n_1, \pm 1;r/n_2)w_{\pm}\bigg(\frac{n_1 n_2^2}{N};\frac{\sqrt{ N}}{r}\bigg),
	\end{equation} 
	for $1/2 < N \leq T^{3+\epsilon}$, where 
	\begin{align*}
		w_{\pm}(x;D)= w (x) x^{-u} H^{\pm}(4\pi D\sqrt{x}) , 
	\end{align*}
	with  $w (x) \in C_c^{\infty} [1,2]$ satisfying $w^{(j)} (x)\ll_j 1$, and  
	\begin{align*}
		H^{+} (x ) & = {i}   \displaystyle\int_{-\infty}^{\infty}k(t) G (u, t)  \frac{ J_{2it}(x)}{\cosh(\pi t)}t dt,\\ 	H^-(x )&=\frac{2}{\pi}\displaystyle\int_{-\infty}^{\infty}k(t) G (u, t) K_{2it}(x)\sinh(t) tdt,
	\end{align*}
	for  $u\in [\epsilon-i\log T,\epsilon+i \log T ]$. 
	It turns out that the analysis and the final estimate will be uniform in $u$, so we have suppressed $u$ from our notation.

	To get the bound $ T^{5/4+\epsilon}/M   ^{1/4}$ as in \eqref{whatwillprove} for the off-diagonal contribution, it suffices to prove the following bound for $\mathcal{S}_{\pm} (N)$. 
	
	\begin{prop}\label{mustprove}
		For any  $N \leq T^{3+\epsilon}$	we have uniformly  
		\begin{equation*}
			\frac {\mathcal{S}_{\pm}(N ) } {N^{1/2} }\ll_{\phi,\epsilon} \frac{T^{5/4+\epsilon}}{M   ^{1/4}} . 
		\end{equation*} 
	\end{prop}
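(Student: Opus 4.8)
The plan is to treat $\mathcal{S}_{\pm}(N)$ in four stages: a reversed Voronoi expansion, a stationary-phase reduction following Young, a functional-equation dichotomy for $\phi$, and finally Cauchy--Schwarz together with Gallagher's large sieve.

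First I would apply the reversed Voronoi formula of Lemma~\ref{voronoi} directly to the inner double sum over $n_{1},n_{2}$ in \eqref{eq: Spm}, with $m=1$, modulus $c=r$, and $a=\overline{a}=\pm1$ (so that $\overline{a}m=\pm1$ matches the second entry of the Kloosterman sum). Owing to the balanced shape of the Miller--Zhou formula \citep{Miller-Zhou} this is immediate---no Kloosterman sum needs to be opened---and it rewrites $\mathcal{S}_{\pm}(N)$ as $\sum_{r}r^{-2}\sum_{\pm'}\sum_{n}A(1,n)\,e(\pm\pm'n/r)\,\mathcal{W}^{\pm'}(\pm'n/r^{3})$, with $\mathcal{W}^{\pm'}$ the Hankel transform of $x\mapsto w_{\pm}(x/N;\sqrt{N}/r)$. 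I would then analyze $\mathcal{W}^{\pm'}$ by Young's combination of the Mellin transform and the method of stationary phase \citep{Young2014weyl}, in the streamlined form already available in \citep{Nunes-GL3}: the Bessel kernel $J_{\phi}$, with asymptotic \eqref{4eq: asymptotic, Bessel, R}, and the archimedean integrals $H^{\pm}$ interact exactly as in Conrey--Iwaniec \citep{CI2000}, the arithmetic phase and the analytic Hankel transform together producing a substantial conductor drop. This localizes $r$ to $r\asymp\sqrt{N}/T$ and $n$ to $n\asymp\sqrt{N}$ and, after the stationary-phase computations, reduces matters (up to $T^{\epsilon}$ and a harmless normalization) to the dual moment of \eqref{dual-moment1}; explicitly, it suffices to prove
\[
\mathcal{I}:=M\int_{-T/M}^{T/M}\Bigl|\sum_{r\sim\sqrt{N}/T}r^{-1/2-it}\Bigr|\,\Bigl|\sum_{n\sim\sqrt{N}}A(1,n)\,n^{-1/2+it}\Bigr|\,|\lambda(t)|\,dt\ \ll\ \frac{T^{5/4+\epsilon}}{M^{1/4}}
\]
for a bounded smooth weight $\lambda$; up to the Voronoi simplification this is the point reached in \citep{Nunes-GL3}.

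The genuinely new input is to treat the $n$-sum according to its length relative to the conductor. Since $\sum_{n\sim\sqrt{N}}A(1,n)n^{-1/2+it}$ over a window $|t|\leq T/M$ behaves like a partial sum of $L(1/2-it,\phi)$, whose conductor has size $(T/M)^{3}$, I distinguish two cases. If $N\leq(T/M)^{3}$ the sum is already short and I keep it. If $N>(T/M)^{3}$ I first apply the functional equation for $\phi$ via Lemma~\ref{summation from FE} (with the integration variable in the role of $t$), replacing the $n$-sum by $\sum_{n}A(1,n)n^{-1/2+it}W(n\sqrt{N};t)$, which by \eqref{bound for Wt(y)} is effectively supported on $n\leq(T/M)^{3}/\sqrt{N}<\sqrt{N}$---a strictly shorter sum, and empty (hence negligible) once $N>(T/M)^{6}$. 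In both cases the effective length $\ell$ of the $n$-sum satisfies $\ell\leq(T/M)^{3/2}$, and one always has $N\leq\min\bigl((T/M)^{6},T^{3+\epsilon}\bigr)\leq T^{2}(T/M)^{3}$.

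In each case I would then apply Cauchy--Schwarz in $t$ and Gallagher's large sieve, Lemma~\ref{lem: large sieve}, to the two Dirichlet polynomials separately, using the Rankin--Selberg bound \eqref{RS} for the $A(1,n)$-moment and a trivial bound for the $r$-moment; this yields $\mathcal{I}\ll M\,(T/M+\sqrt{N}/T)^{1/2}(T/M+\ell)^{1/2}T^{\epsilon}$. It then remains to verify $(T/M+\sqrt{N}/T)(T/M+\ell)\ll(T/M)^{5/2}$: the terms $(T/M)^{2}$ and $(T/M)\ell\leq(T/M)^{5/2}$ are clear; the cross term $\sqrt{N}\,\ell/T$ is $\leq(T/M)^{3}/T\leq(T/M)^{5/2}$ in both cases (using $N\leq(T/M)^{3}$ in case~1 and $\ell=(T/M)^{3}/\sqrt{N}$ in case~2); and the cross term $(T/M)\sqrt{N}/T=\sqrt{N}/M$ is $\ll(T/M)^{5/2}$ by $N\leq T^{2}(T/M)^{3}$. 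Hence $\mathcal{I}\ll(T/M)(T/M)^{5/4}T^{\epsilon}=T^{5/4+\epsilon}/M^{1/4}$, uniformly for $N\leq T^{3+\epsilon}$ and $T^{\epsilon}\leq M\leq T^{1-\epsilon}$. I expect the stationary-phase reduction to be the technically heaviest ingredient, but it is largely inherited (in streamlined form) from \citep{Young2014weyl,Nunes-GL3}; the real novelty, and the delicate point, is the functional-equation dichotomy together with the check that it keeps \emph{both} Dirichlet polynomials short enough for Gallagher's inequality to yield the clean exponent $5/4$ uniformly in $N$ and $M$.
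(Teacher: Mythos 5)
Your proposal follows the paper's argument essentially step for step: reversed Voronoi via Miller--Zhou, Young's stationary-phase/Mellin reduction to a dual moment of the shape \eqref{dual-moment1}, the functional-equation dichotomy according to whether the $n$-length $\sqrt{N}$ exceeds $(T/M)^{3/2}$, and then Cauchy--Schwarz plus Gallagher; the final exponent comes out the same way. Two points deserve comment. First, the localization you assert ($r\asymp\sqrt{N}/T$ and $n\asymp\sqrt{N}$ simultaneously) is only the introduction's heuristic; the precise output of the stationary-phase lemma splits according to the two Kuznetsov/Voronoi signs, with $r<\sqrt{N}/(M^{1-\epsilon}T)$ and $n\asymp\sqrt{N}$ in the $+$ case, $r\asymp\sqrt{N}/T$ and $n<\sqrt{N}/M^{3-\epsilon}$ in the $-$ case, and with $|t|\asymp U$ for a parameter $T^{\epsilon}<U<T/M^{1-\epsilon}$ depending on $n/r$ rather than $U=T/M$ outright. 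Since the resulting large-sieve bound $M(U+R)^{1/2}(U+\ell)^{1/2}$ is monotone in $R$, $U$ and $\ell$, your worst-case choices dominate both sign cases and all dyadic blocks, so the arithmetic still closes; but as stated the localization is an overstatement, and you must sum over all dyadic $R$ up to the cutoff rather than a single block. Second, and more substantively, once Lemma~\ref{summation from FE} is applied the dual sum carries the weight $W(N^{\pm}n;t)$, which depends on $t$: the expression $\sum_{n}A(1,n)n^{-1/2+it}W(N^{\pm}n;t)$ is not a Dirichlet polynomial with $t$-independent coefficients, so Gallagher's inequality cannot be applied to it directly. This needs a separation-of-variables step---open $W$ via its Mellin representation \eqref{eq: W(y;t) = }, truncate the $s$-integral at $|\mathrm{Im}(s)|=T^{\epsilon}$ using the rapid decay of $\widetilde{w}$, and absorb the unimodular gamma quotient into the bounded weight $\lambda^{\pm}(t)$---before the large sieve is invoked. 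With these two repairs your argument is the paper's proof.
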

	
	Finally, we remark that the $r$-sum can be truncated as (the Bessel integral in) $w_{\pm}(x;D)$  is negligibly small unless 
	\begin{align}\label{D<T}
		D \gg T . 
	\end{align}
	See \cite[\S 8.1]{Qi-GL(3)}, \cite[\S 7]{Qi-Liu-Moments}, and also \cite[\S \S 4, 5]{Li2011bounds}, \cite[\S 7]{Young2014weyl} in slightly different settings. This condition can be refined as in \eqref{condition +} and \eqref{condition -} (see Remark \ref{rem: conditions}).
	
	\subsection{Applying the Voronoi formula}

	We apply the Voronoi summation formula in Lemma \ref{voronoi}, with $a = \pm 1$, $c=r$, and $m=1$, for the $(n_1, n_2)$-sum in \eqref{eq: Spm}, obtaining
	\begin{align}\label{eq: S(N)}
		\mathcal{S}_{\pm}(N)&= N \sum_{r > 0} \frac{1}{r^2}\sum_{n\neq 0}A(1,|n|)e\left(\pm  \frac{n}{r}\right)  W_{\pm} \bigg(\frac{N n}{r^3};\frac{\sqrt{N }}{r}\bigg),  
	\end{align} 
	where $ {W}_{\pm} (y;D)$ is the Hankel transform of $w_{\pm}(x,D)$. This expression is much simpler than that of Xiaoqing Li \citep{Li2011bounds} after applying the (first) Voronoi.

	\section{Results on the Hankel transform} 
	
	It turns out that the exponential factor and the Hankel transform in \eqref{eq: S(N)} combine perfectly by substantially reducing the oscillation of the latter.   Accordingly, define 
	\begin{equation}\label{W-tilde}
		\widetilde{W}_{\pm} (y; D) = e (\pm y/D^2) {W}_{\pm} (y; D).
	\end{equation}
	This kind of combination can be traced back to the work of Conrey and Iwaniec \citep{CI2000}. 
	
	The lemma below provides a nice expression of $\widetilde{W}_{\pm} (y; D) $ designed for the large sieve in the next stage. It is   proven by Young in \cite[Lemma 8.2]{Young2014weyl} using the method of stationary phase and Mellin transform. In the $\mathrm{GL}(3)$ setting, the reader is referred to \cite[\S 4]{huang2016hybrid} and \cite[\S \S 10--13]{Qi-GL(3)} for more details (the latter also contains the complex analogue,  which is needed if we work over general number fields).

	\begin{lem}\label{integralrep}
		Let $T^{\epsilon} \leq M    \leq T^{1-\epsilon}$.	Suppose  that $|y| > T^{\epsilon}$. For $  y  \asymp X D^2$  we may write 
		\begin{align}\label{eq: W = Phi}
			\widetilde{W}_{\pm} (y; D) = \frac {M    T^{1+\epsilon}} {\sqrt{|y|}} \Phi^{\pm} (y/D^2) + O_A (T^{-A}), 
		\end{align} 
		such that $\Phi^{+} (x ) = 0$ and $ \Phi^{-} (x ) = 0 $ unless
		\begin{align}\label{condition +}
			T < D/  M   ^{1-\epsilon} , \qquad |X| \asymp D, 
		\end{align}
		and
		\begin{align}\label{condition -}
			|X| < D / M   ^{3-\epsilon}, \qquad  T \asymp D, 
		\end{align}
		respectively, in which cases 
		\begin{align}\label{eq: Phi pm}
			\Phi^{\pm} (  x) = \frac 1 {T} \int_{|t| \asymp U^{\pm}} \lambda_{X, T}^{\pm} (t) |x|^{it} d t, 
		\end{align}
		with $\lambda_{X, T}^{\pm} (t) \ll 1$ {\rm(}$\lambda_{X, T}^{\pm} (t)$ depends on $X$, $T$ but the implied constant does not{\rm)} and 
		\begin{align}
			U^+ = T^2/|X|, \qquad U^- = |X|^{1/3} T^{2/3},
		\end{align}
		provided that $|X| < T^{2-\epsilon}$. 
	\end{lem}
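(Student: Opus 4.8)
The plan is to unravel the definition of $\widetilde{W}_{\pm}(y;D)$ step by step, tracking where the oscillation lives and where it cancels. Recall that $W_{\pm}(y;D)$ is the Hankel transform of $w_{\pm}(x;D) = w(x) x^{-u} H^{\pm}(4\pi D\sqrt{x})$, and that $H^{\pm}$ is itself an integral over the spectral variable $t$ of the Bessel functions $J_{2it}$ or $K_{2it}$ against the Gaussian weight $k(t)$ (localizing $t \asymp T$ with width $M$) and the slowly varying factor $G(u,t)$. So $\widetilde{W}_{\pm}(y;D)$ is a triple integral: the Hankel transform (equivalently $\int_0^\infty w_{\pm}(x;D) J_\phi(-xy)\,dx$ using the Bessel kernel $J_\phi$), the $t$-integral inside $H^{\pm}$, and — once one inserts the asymptotic expansions — the oscillatory exponentials $e(\pm 3(xy)^{1/3})$ from $J_\phi$ in \eqref{4eq: asymptotic, Bessel, R} and $e(\pm 2D\sqrt{x})$-type phases from the Bessel functions $J_{2it}, K_{2it}$ in the regime where their argument $4\pi D\sqrt{x}$ is large compared to $t\asymp T$ (or, in the transitional/decaying regime for $H^-$, from the uniform asymptotics of $K_{2it}$). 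The first step is therefore to substitute all these asymptotic expansions (keeping a single main term plus a negligible tail, which is where the $O_A(T^{-A})$ and the hypothesis $|y| > T^\epsilon$ enter) and to collect the total phase as a function of $x$, $t$, and the parameters $y, D, T$.

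Next I would apply the method of stationary phase in the $x$-variable. The combined phase has the schematic form $\psi(x) = \pm 2D\sqrt{x} + (\text{terms from }J_\phi) \mp tx^{\text{power}} + \dots$, and the point of the combination $\widetilde{W}_\pm = e(\pm y/D^2) W_\pm$ is precisely that the factor $e(\pm y/D^2)$ is the value of the stationary phase contribution at the critical point in the "$+$" interaction — this is the Conrey–Iwaniec conductor-drop mechanism alluded to after \eqref{W-tilde}. Carrying out stationary phase in $x$ over the support $x \asymp 1$ (from $w \in C_c^\infty[1,2]$) produces a main term of size $MT^{1+\epsilon}/\sqrt{|y|}$ — the $\sqrt{|y|}$ coming from the second derivative of the phase and the factor $MT$ from the $t$-measure — times a residual oscillatory integral in $t$ alone; the non-stationary ranges contribute negligibly, and analyzing when a genuine stationary point exists inside the support gives exactly the support conditions \eqref{condition +} and \eqref{condition -}, distinguishing the $\pm$ cases by whether the $J$-Bessel (oscillatory, giving $T < D/M^{1-\epsilon}$, $|X|\asymp D$) or $K$-Bessel (exponentially decaying unless $T\asymp D$, forcing $|X| < D/M^{3-\epsilon}$) behaviour dominates. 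Then I would pass to a Mellin transform in the surviving variable $x^{it}$, following Young's \cite[Lemma 8.2]{Young2014weyl} verbatim in structure: writing the leftover $t$-integral as $\frac1T \int_{|t|\asymp U^\pm} \lambda^\pm_{X,T}(t)\,|x|^{it}\,dt$, where $\lambda^\pm_{X,T}(t)$ absorbs the bounded amplitude (Gaussian weight, $G(u,t)$, Bessel amplitudes, stationary-phase Jacobians) and $U^\pm$ is read off from the length of the $t$-range after the change of variables — $U^+ = T^2/|X|$ and $U^- = |X|^{1/3}T^{2/3}$ — which is where the condition $|X| < T^{2-\epsilon}$ is needed to keep $U^\pm$ of reasonable size.

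The bound $\lambda^\pm_{X,T}(t) \ll 1$ then follows from Stirling's formula applied to the gamma quotients in $G(u,t)$ and to the Bessel asymptotics, together with the boundedness of $w$ and its derivatives and the fact that $|e(\pm y/D^2)| = 1$; uniformity in $u \in [\epsilon - i\log T, \epsilon + i\log T]$ holds because $\widetilde w(s)$ and the extra $x^{-u}$ factor only contribute polynomially bounded, slowly varying perturbations. The main obstacle, as in all such analyses, is the bookkeeping of the two-dimensional stationary phase: one must handle the $J$-Bessel transitional range (argument $4\pi D\sqrt{x}$ comparable to $t \asymp T$, i.e. the Airy regime), make sure the $J_\phi$-expansion \eqref{4eq: asymptotic, Bessel, R} is legitimately truncated there, and verify that the critical point of the $x$-phase lies in $[1,2]$ exactly under \eqref{condition +}/\eqref{condition -} and nowhere else — this case division, and checking that every discarded range is $O_A(T^{-A})$, is the technical heart. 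Since this is structurally identical to \cite[Lemma 8.2]{Young2014weyl} and worked out in the $\mathrm{GL}(3)$ language in \cite[\S 4]{huang2016hybrid} and \cite[\S\S 10--13]{Qi-GL(3)}, I would cite those for the details rather than reproduce the full stationary-phase computation, and devote the written proof to explaining how the combination \eqref{W-tilde} effects the conductor drop and how the support conditions and the values of $U^\pm$ emerge.
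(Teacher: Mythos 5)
Your proposal follows essentially the same route as the paper, which in fact offers no proof of Lemma \ref{integralrep} beyond citing Young's stationary-phase-and-Mellin-transform argument \cite[Lemma 8.2]{Young2014weyl} and its $\mathrm{GL}(3)$ elaborations \cite[\S 4]{huang2016hybrid}, \cite[\S\S 10--13]{Qi-GL(3)}, together with the remark attributing the conditions in \eqref{condition +} and \eqref{condition -} respectively to the Bessel-integral analysis of $H^{\pm}$ and to the location of the $x$-stationary point. Your sketch correctly identifies both mechanisms (the conductor drop $e(\pm y/D^2)$ cancelling the stationary-phase value, and the $J$- versus $K$-Bessel dichotomy producing the two support conditions) and defers to the same references for the technical details, so it is consistent with the paper's treatment and, if anything, more explicit.
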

	
	\begin{rmk}
	    \label{rem: conditions}
	The conditions $T < D/M^{1-\epsilon}$ and $T \asymp D$ in \eqref{condition +} and \eqref{condition -} arise in Lemma 7.1 and 7.2 in \cite{Young2014weyl} for the analysis of Bessel integrals. The conditions $|X| \asymp D$ and $|X| < D / M   ^{3-\epsilon}$ in \eqref{condition +} and \eqref{condition -} come from the discussions after (8.13) in \cite{Young2014weyl} or Corollary 11.10 in \cite{Qi-GL(3)}. 
	\end{rmk}
	
	\begin{rmk}
	    For the   case  $|X| \geq T^{2-\epsilon}$,   $U^{+} = T^2/|X|$ needs to be replaced by $U^{+} = T^{\epsilon}$. See \cite[Corollary 11.10, Lemma 13.2]{Qi-GL(3)} for more details. However, as it will be seen, this extreme case does not occur over $\mathbb{Q}$.
	\end{rmk}

	The condition $|y| > T^{\epsilon}$ is needed for applying the asymptotic formula \eqref{4eq: asymptotic, Bessel, R}.  In our setting, $N \leq T^{3+\epsilon}$, and $$y = Nn/r^3, \qquad D = \sqrt{N}/ r. $$ Therefore $ |y| =  |n| D^3/ \sqrt{N} > T^{3/2-\epsilon}$ in view of $D \gg T$ as in \eqref{D<T}.
	
	For the $+$ and $-$ case,  the conditions in \eqref{condition +} or \eqref{condition -} amount to 
	\begin{align}\label{eq: range of r, n, +}
		r < \frac { \sqrt{N} } {M   ^{1-\epsilon} T}, \qquad |n| \asymp \sqrt{N},
	\end{align} or
	\begin{align}\label{eq: range of r, n, -}
		\ \   r \asymp \frac { \sqrt{N} } {T}, \qquad \ \ \ \ |n| < \frac {\sqrt{N}}{M   ^{3-\epsilon}}, 
	\end{align}
	respectively. In particular,  $|X| < T^{2-\epsilon}$ is   obviously satisfied as $|X| \asymp |n / r| \ll \sqrt{N} <  T^{3/2+\epsilon}$. Moreover,  under the conditions  in \eqref{condition +} or \eqref{condition -},  we have 
	\begin{align}\label{U+, U- < U}
		T^{\epsilon}	< U^{\pm} < \frac {T} {M   ^{1-\epsilon}} ,
	\end{align}
	for $ T^{\epsilon}/D^2 < |X| < T^{2-\epsilon} $.

	\section{End game: the large sieve}
	By \eqref{eq: S(N)}, \eqref{W-tilde}, and \eqref{eq: W = Phi}, we may write 
	\begin{align}\label{eq: S(N), 2}
		\mathcal{S}_{\pm}(N) =  M    \sqrt{N}  T^{1+\epsilon}  \sum_{r>0}\frac{1}{\sqrt{r}}\sum_{n\neq 0} \frac{A(1,|n|)}{\sqrt{|n|}} \Phi^{\pm} \Big(\frac {n} {r} \Big) + O_{A} (T^{-A}).  
	\end{align} 
	By applying \eqref{eq: Phi pm} and a dyadic decomposition of the variables $r$ and $n$, we deduce that 
	$\mathcal{S}_{\pm}(N) / \sqrt{N}$ is bounded by the supremum of the norm of
	\begin{equation}\label{almostthere}
		M      T^{\epsilon}   \int_{|t| \asymp U} \lambda^{\pm} (t) \sum_{r \sim R^{\pm}}\frac{1}{\sqrt{r} }\sum_{n \sim N^{\pm}}\frac{A(1,n)}{\sqrt{n} } w \Big(\frac n {N^{\pm}}\Big)\Big(\frac{n}{r}\Big)^{it} dt  ,
	\end{equation} for parameters $R^{\pm}$, $N^{\pm}$, and $U$ in the ranges (see \eqref{eq: range of r, n, +}, \eqref{eq: range of r, n, -}, and \eqref{U+, U- < U})
	\begin{equation}\label{URN} 
		\begin{split}
			& 1/2 < R^+ <    {\sqrt{N}}  / {M   ^{1-\epsilon} T}, \qquad   \   N^{+} \asymp \sqrt{N}, \\
			& R^- \asymp  {\sqrt{N}} / {T}, \qquad   1/2 < N^{-} <  {\sqrt{N}} / {M   ^{3-\epsilon}}  , \\
			& \qquad \qquad \ \    T^{\epsilon} <  U <    {T}  / {M   ^{1-\epsilon} }. 
		\end{split}	   
	\end{equation}
	Here again $w (x) \in C_c^{\infty} [1,2]$ and $\lambda^{\pm} (t)$ is the $\lambda_{X, T}^{\pm} (t)$ in Lemma \ref{integralrep} with subscripts suppressed for simplicity.  
	For the $n$-sum in \eqref{almostthere}, we can further apply Lemma \ref{summation from FE} (the functional equation for $\phi$) to get
	\begin{align}\label{two-cases}
		\sum_{n \sim N^{\pm}}\frac{A(1,n)}{n^{1/2-it}} w ( n /N^{\pm}) = \sum_{n \leq   {U^{3+\epsilon}}/{N^{\pm}}}\frac{A(1,n)}{n^{1/2+it}} W (N^{\pm} n, t)+O(T^{-A}),
	\end{align}
	for $|t| \asymp U$, so that the length of summation can be shortened if $N^{\pm} > U^{3/2}$. 
	
	Finally, by Cauchy--Schwarz and Lemma \ref{lem: large sieve}, along with the Rankin--Selberg bound for $A(1, n)$ in \eqref{RS}, we infer that \eqref{almostthere} may be bounded by
	\begin{align}\label{final bound}
		M       T^{\epsilon} \sqrt{R^{\pm}+U}  \sqrt{    \min   \left\{
			N^{\pm} , U^3/N^{\pm} \right\} + U  }   ,
	\end{align} 
	where   $\min   \{
	N^{\pm}, U^3/N^{\pm}  \}$ is due to  the observation in \eqref{two-cases}. 
	
	A subtle issue is that once \eqref{two-cases} were applied, the $W (N^{\pm} n; t)$ on the right is dependent on $t$. To address this, we use the expression \eqref{eq: W(y;t) = } to separate the variables $n$ and $t$---this can be done effectively with the $s$-integral truncated at $|\mathrm{Im}(s) | = T^{\epsilon}$ and  ${\gamma(1/2+it-s,\phi)}/{\gamma(1/2-it+s,\phi)}$ absorbed into $\lambda^{\pm} (t)$ (note that  $\widetilde{w}(s)$ is of rapid decay while the gamma quotient is of unity norm for $\mathrm{Re}(s)=0$).  
	
	Trivially, we use $\min \{
	N^{+}, \allowbreak U^3/N^{+} \} \leq U^{3/2}$ and $\min \{
	N^{-}, \allowbreak U^3/N^{-} \} \leq N^{-}$. In view of \eqref{URN} and the inequality $N \leq T^{3+\epsilon}$, \eqref{final bound} is further bounded by
	\begin{align*}
		M       T^{\epsilon} \bigg(\frac {\sqrt{N}} {M    T} +   \frac{T}{M   }\bigg)^{1/2}\bigg(\frac{T}{M   }\bigg)^{3/4} \ll \frac {T^{5/4+\epsilon}} {M   ^{1/4}}, 
	\end{align*}
	in the $+$ case, and 
	\begin{align*}
		M       T^{\epsilon} \bigg(\frac {\sqrt{N}} {T} +\frac{T}{M   }\bigg)^{1/2}\bigg(\frac{\sqrt{N}}{M   ^3}\bigg)^{1/2} \ll \frac {T^{1+\epsilon}} {M   ^{1/2}} + \frac {T^{5/4+\epsilon}} {M    },  
	\end{align*}
	in the $-$ case, 
	as desired by Proposition \ref{mustprove}.

	\begin{rmk}\label{rmk: - case}
		For the $-$ case, we did not need to apply the functional equation {\rm(}Lemma \ref{summation from FE}{\rm)}---in the same way that Xiaoqing Li \citep{Li2011bounds} did not use a second Voronoi. Note that $ N^{-} < U^{3/2} $ in the typical case when $N^{-} \asymp \sqrt{N}/ M   ^{3 }$ and $U \asymp T / M    $. 
	\end{rmk}
	
	\begin{rmk}\label{rmk: + case}
		For the $+$ case, the worst case does not occur when $N \asymp T^{3}$ {\rm(}the   transition range{\rm)} and $N^{+}\asymp T^{3/2}$. In fact, the possibility of applying functional equation with $|t|\asymp T/M$ after \eqref{almostthere} makes the range $N\asymp (T/M)^{3 }$ the one   with the largest contribution. 
	\end{rmk}


\begin{thebibliography}{10}
		
		\small
		
		\bibitem{Agg2021}
		K.~Aggarwal.
		\newblock A new subconvex bound for {$\rm GL(3)$} {$L$}-functions in the
		{$t$}-aspect.
		\newblock {\em Int. J. Number Theory}, 17(5):1111--1138, 2021.
		
		\bibitem{Blomer2012twisted}
		V.~Blomer.
		\newblock Subconvexity for twisted {$L$}-functions on {${\rm GL}(3)$}.
		\newblock {\em Amer. J. Math.}, 134(5):1385--1421, 2012.
		
		\bibitem{Blo-Li-Mil}
		V.~Blomer, X.~Li, and S.~D. Miller.
		\newblock A spectral reciprocity formula and non-vanishing for {$L$}-functions
		on {${\rm GL}(4)\times {\rm GL}(2)$}.
		\newblock {\em J. Number Theory}, 205:1--43, 2019.
		
		
		\bibitem{CI2000}
		J.~B. Conrey and H.~Iwaniec.
		\newblock The cubic moment of central values of automorphic {$L$}-functions.
		\newblock {\em Ann. of Math. (2)}, 151(3):1175--1216, 2000.
		
		\bibitem{gallagher1970large}
		P.~X. Gallagher.
		\newblock A large sieve density estimate near {$\sigma =1$}.
		\newblock {\em Invent. Math.}, 11:329--339, 1970.
		
		\bibitem{GoldfeldGLn}
		D.~Goldfeld.
		\newblock {\em Automorphic {F}orms and {L}-functions for the {G}roup {${\rm
					GL}(n,\mathbf{R})$}}, {Cambridge Studies in Advanced
			Mathematics}, Vol.~99.
		\newblock Cambridge University Press, Cambridge, 2015.
		
		\bibitem{huang2016hybrid}
		B.~Huang.
		\newblock Hybrid subconvexity bounds for twisted {$L$}-functions on {GL}(3).
		\newblock {\em Sci. China Math.}, 64(3):443--478, 2021.
		
			\bibitem{Iwaniec90}
		H.~Iwaniec.
		\newblock Small eigenvalues of {L}aplacian for {$\Gamma_0(N)$}.
		\newblock {\em Acta Arith.}, 56(1):65--82, 1990.
		
		\bibitem{IK-analytic}
		H.~Iwaniec and E.~Kowalski.
		\newblock {\em Analytic {N}umber {T}heory},   {American Mathematical
			Society Colloquium Publications}, Vol.~53.
		\newblock American Mathematical Society, Providence, RI, 2004.
		
			\bibitem{Kumar}
		S.~Kumar.
		\newblock Subconvexity bound for $GL(3)\times GL(2)$ {$L$}-functions in $GL(2)$ spectral aspect.
		\newblock {\em arXiv:2007.05043}, 2020.
		
		\bibitem{Kwan}
		C.-H. Kwan.
		\newblock {S}pectral moment formulae for ${GL}(3)\times {GL}(2)$
		{$L$}-functions.
		\newblock {\em arXiv:\allowbreak2112.08568}, 2021.
		
		\bibitem{lapid2003non}
		E.~M. Lapid.
		\newblock On the nonnegativity of {R}ankin-{S}elberg {$L$}-functions at the
		center of symmetry.
		\newblock {\em Int. Math. Res. Not.}, (2):65--75, 2003.
		
		\bibitem{Li2011bounds}
		X.~Li.
		\newblock Bounds for {${\rm GL}(3)\times {\rm GL}(2)$} {$L$}-functions and
		{${\rm GL}(3)$} {$L$}-functions.
		\newblock {\em Ann. of Math. (2)}, 173(1):301--336, 2011.
		
		\bibitem{Qi-Liu-Moments}
		S.-C. Liu. and Z.~Qi.
		\newblock Moments of central {$L$}-values for {M}aass forms over imaginary
		quadratic fields.
		\newblock {Trans. Amer. Math. Soc.}, 375(5):3381--3410, 2022.
		
		\bibitem{mckee2015improved}
		M.~McKee, H.~Sun, and Y.~Ye.
		\newblock Improved subconvexity bounds for {${GL}(2)\times {GL}(3)$} and {${GL}(3)$}
		{$L$}-functions by weighted stationary phase.
		\newblock {\em Trans. Amer. Math. Soc.}, 370(5):3745--3769, 2018.
		
		\bibitem{MS2006automorphic}
		S.~D. Miller and W.~Schmid.
		\newblock Automorphic distributions, {$L$}-functions, and {V}oronoi summation
		for {${\rm GL}(3)$}.
		\newblock {\em Ann. of Math. (2)}, 164(2):423--488, 2006.
		
		\bibitem{Miller-Zhou}
		S.~D. Miller and F.~Zhou.
		\newblock The balanced {V}oronoi formulas for {${\rm GL}(n)$}.
		\newblock {\em Int. Math. Res. Not. IMRN}, (11):3473--3484, 2019.
		
		\bibitem{munshi2015circle}
		R.~Munshi.
		\newblock The circle method and bounds for {$L$}-functions---{III}:
		{$t$}-aspect subconvexity for {${GL}(3)$} {$L$}-functions.
		\newblock {\em J. Amer. Math. Soc.}, 28(4):913--938, 2015.
		
		\bibitem{Nelson}
		P.~D. Nelson.
		\newblock Bounds for standard {$L$}-functions.
		\newblock {\em arXiv:2109.15230}, 2021.
		
		\bibitem{Nunes-GL3}
		R.~M. Nunes.
		\newblock On the subconvexity estimate for self-dual {${\rm GL}(3)$}
		{$L$}-functions in the $t$-aspect.
		\newblock {\em arXiv:1703.04424}, 2017.
		
		
		\bibitem{Qi-Bessel}
		Z.~Qi.
		\newblock Theory of fundamental {B}essel functions of high rank.
		\newblock {\em Mem. Amer. Math. Soc.}, 267(1303):vii+123, 2020.
		
		
		\bibitem{Qi-GL(3)}
		Z.~Qi.
		\newblock Subconvexity for {$L$}-functions on {$\mathrm{GL}_3$} over number
		fields.
		\newblock {\em to appear in J. Eur. Math. Soc. (JEMS)}, 2020.
		
		
			\bibitem{Sun-Ye}
		H.~Sun and Y.~Ye.
		\newblock Further improvement on bounds for {$L$}-functions related to
   {$GL(3)$}.
		\newblock {\em Int. J. Number Theory}, 15(7):1487--1517, 2019.
		
		
			\bibitem{Titchmarsh}
	E.~C. Titchmarsh.
		\newblock {\em The theory of the {R}iemann zeta-function},   {Second edition. Edited and with a preface by D. R. Heath-Brown.}
		\newblock The Clarendon Press, Oxford University Press, New York, 1986. x+412 pp.
		
		\bibitem{Young2014weyl}
		M.~P. Young.
		\newblock Weyl-type hybrid subconvexity bounds for twisted {$L$}-functions and
		{H}eegner points on shrinking sets.
		\newblock {\em J. Eur. Math. Soc. (JEMS)}, 19(5):1545--1576, 2017.
		
	\end{thebibliography}

\end{document}